\def\R{{ \mathbb{R}}}
\newtheorem{theorem}{Theorem}
\newtheorem{lemma}[theorem]{Lemma}
\newtheorem{remark}[theorem]{Remark}
\newenvironment{proof}[1][Proof]{\textbf{#1.} }{\ \rule{0.5em}{0.5em}}
\def\cal{\mathcal}
\renewcommand{\geq}{\geqslant}
\def\leq{\leqslant}
\newcommand{\N}{\mathbb{N}}
\newcommand{\E}{\mathbb{E}}
\def\cal{\mathcal}
\def\ep{\varepsilon}
\def\r{{\mathbb R}}
\def\0.5{{\frac{1}{2}}}
\newcommand\1{\leavevmode\hbox{\rm \small1\kern-0.35em\normalsize1}}
\newcommand\ind[1]{\1_{\{#1\}}}
\newcommand{\limite}[2]{\xrightarrow[#1]{#2}}
\renewcommand{\thefootnote}{\fnsymbol{footnote}}
\begin{document}

\renewcommand{\thefootnote}{\arabic{footnote}}

\begin{center}
{\large{\bf Almost sure central limit theorems for random ratios and applications to LSE for fractional Ornstein-Uhlenbeck processes}}\\~\\
Peggy Cénac\footnote{Institut de Mathématiques de Bourgogne, Université de Bourgogne,
9 Rue Alain Savary, 21078 Dijon, France. E-mail: {\tt  peggy.cenac@u-bourgogne.fr} } and
Khalifa Es-Sebaiy\footnote{National School of Applied Sciences -
Marrakesh, Cadi Ayyad University, Morocco. E-mail: {\tt
k.essebaiy@uca.ma} }
\\
{\it Université de Bourgogne and  Cadi Ayyad University }\\~\\
\end{center}

{\small \noindent {\bf Abstract:} We investigate an almost sure
limit theorem (ASCLT) for sequences of random variables having the
form of a ratio of two terms such that the numerator  satisfies the
ASCLT and the denominator is a positive term which converges almost
surely to $1$.  This result leads to the ASCLT for
 least square estimators for  Ornstein-Uhlenbeck process driven
by fractional Brownian motion.
 }\\

{\small \noindent {\bf Key words}: Almost sure limit theorem;
 least squares estimator;  fractional Ornstein Uhlenbeck process; multiple stochastic integrals.
\\

\noindent {\bf 2000 Mathematics Subject Classification:} 60F05; 60G15; 60H05; 60H07. \\

\section{Introduction}
The Almost Sure Central Limit Theorem (ASCLT) was simultaneously proven by Brosamler
\cite{brosamler} and Schatte \cite{schatte}. The simplest form of the ASCLT (see Lacey and Phillip \cite{LP}) states that if  $\{X_n, n\geq1\}$ is a sequence of real-valued independent identically distributed random
variables with $\E(X_1) = 0$, $\E(X_1^2)= 1$, denoting the normalized
partial sums by $S_n~=~\frac{1}{\sqrt{n}}\left(X_1+\ldots+X_n\right)$
then, almost surely, for all $z\in\R$,
\begin{eqnarray*}
  \frac{1}{\log
n}\sum_{k=1}^{n}\frac{1}{k}\ind{S_k\leq z}\limite{n\to \infty}{a.s.}
P(N\leq z),
\end{eqnarray*}
where $N$ is a $\mathcal{N }(0, 1)$ random variable and $\ind{A}$ denotes the indicator of the set $A$. Equivalently, for any bounded and continuous
function $\varphi:\R\rightarrow\R,$ one has almost surely,
\[\frac{1}{\log n}\sum_{k=1}^{n}\frac{1}{k}\varphi(S_k)\limite{n\to \infty}{a.s.} \E(\varphi(N)).\]
The ASCLT was first stated by Lévy \cite{Levy} without
proof. For more discussions about ASCLT see for example Berkes and Cs\'aki
\cite{BC} and the references in the survey paper Berkes
\cite{berkes}.

Ibragimov and Lifshits \cite{IL2000, IL98} give a criterion (see Theorem~\ref{Ibragimov Lifshits}) for the ASCLT based on the rate of convergence of the empirical characteristic function. Using this
criterion and Malliavin calculus, Bercu \emph{et al.}\@ \cite{BNT} provide a criterion for ASCLT for functionals of
general Gaussian fields.

Our first aim is to prove an almost sure
central limit theorem for a sequence of the form
$\{G_n/R_n\}_{n\geq1}$ where $\{G_n\}_{n\geq1}$
 satisfies the ASCLT and  $\{R_{n}\}_{n\geq
1}$ is a sequence of positive random variables not necessarily independent with $\{G_n\}$ and converging almost
surely to $1$ (see Theorem~\ref{ASCLT quotient}). We apply our ASCLT
to a fractional Ornstein-Uhlenbeck process  $X=\left\{X_t,
t\geq0\right\}$ defined as
\begin{eqnarray*} X_0=0, \mbox{ and }\ dX_t=-\theta X_tdt+dB_t,\quad t\geq0,
\end{eqnarray*} where  $B=\left\{B_t,
t\geq0\right\}$ is a fractional Brownian motion with
Hurst parameter $H\in(\frac{1}{2},1)$, and where $\theta$ is a real parameter. $\theta$ is unknown and estimated with least squares estimators (LSE). Theorem~\ref{ASCLT quotient} leads to the ASCLT for the LSE in this model.
\medskip

\emph{In the continuous case}, recently, the parametric estimation of continuously
observed fractional Ornstein-Uhlenbeck process is
 studied in Hu and Nualart \cite{ HN} and Belfadli \emph{et al.}\@ \cite{BEO}  by using the least
 squares estimator
$\widehat{\theta}_t$ of $\theta$ given by
\begin{eqnarray*} \widehat{\theta}_t =\frac{\int_0^tX_sdX_s}{\int_0^tX_s^2ds},\quad t\geq0.
\end{eqnarray*}
Hu and Nualart \cite{HN} proved the strong consistency and asymptotic normality of
$\widehat{\theta}_t$ in the ergodic case, that is when $\theta > 0$. In the non-ergodic case $\theta<0$,
 Belfadli \emph{et al.}\@ \cite{BEO} established that the LSE $\widehat{\theta}_t $ of $\theta$ is
strongly consistent and asymptotically Cauchy.

 In this paper, we
focus our discussion on the ergodic case $\theta > 0$. We shall prove that when  $H\in(1/2,3/4)$ the sequence $\{\sqrt{n}({\theta
}-\widehat{{\theta }}_{n})\}_{n\geq1}$ satisfies the ASCLT (see
Theorem \ref{ASCLT continuous case}).

\medskip

\emph{In the discrete case}, the process $X$
is observed equidistantly in time with the step size $\Delta_n$, that is for any  $i\in~\{0,\ldots,n\},$ $t_i=i\Delta_{n}$.
 Consider the least squares estimator $\widetilde{\theta}_n$ of $\theta$
defined by
\begin{equation*}
\widetilde{\theta}_{n}=-\frac{\sum_{i=1}^{n}
X_{t_{i-1}}(X_{t_{i}}-X_{t_{i-1}})}{\Delta_{n}\sum_{i=1}^{n}
X_{t_{i-1}}^2}.
\end{equation*}
When $\theta>0$, Es-Sebaiy \cite{khalifa} established the
convergence in probability and gave the rate of convergence of the
least squares estimator $\widetilde{\theta}_{n}$.

In the present work, we shall prove the strong consistency of
$\widetilde{\theta}_{n}$ (the almost sure convergence of
$\widetilde{\theta}_{n}$ to the parameter $\theta$, see
Theorem~\ref{strong consistency}). We shall also prove that, in the
case when $H\in(1/2,3/4)$, the sequence
\[\left\{\frac{\sqrt{n\Delta_n}}{\sigma_n}({\theta
}-\widetilde{{\theta }}_{n})\right\}_{n\geq1}\]
satisfies the ASCLT
 (see Theorem~\ref{ASCLT discrete case}) where $\sigma_n$ is the positive normalizing sequence defined in (\ref{defSigma}).

\medskip

The paper is organized as follows. Section 2 contains the
basic tools of Malliavin calculus for the fractional Brownian motion
needed throughout the paper. In section 3 we prove the ASCLT for a
sequence of random variables having the form of a ratio of two terms
such that the numerator  satisfies the ASCLT and the denominator is a
positive term which converges almost surely to $1$. In Section 4, we
use our ASCLT to study the ASCLT for  least square estimators
 for the fractional Ornstein-Uhlenbeck processes.

\section{Preliminaries}
In this section we describe some basic facts on the  stochastic calculus with respect to a
fractional Brownian motion. For more complete presentation on the subject, see  Nualart \cite{Nualart-book}.\\
The  fractional Brownian motion $\{B_t, t\geq0\}$ with Hurst
parameter $H\in(0,1)$, is defined as a centered Gaussian process
starting from zero with covariance
\[R_H(t,s):=\E(B_tB_s)=\frac{1}{2}\left(t^{2H}+s^{2H}-|t-s|^{2H}\right).\]
Assume that $B$ is defined on a complete probability space
$(\Omega, \mathcal{F}, P)$ such that $\mathcal{F}$ is the
sigma-field generated by $B$. By  Kolmogorov's continuity criterion
and the equality
\[\E\left(B_t-B_s\right)^2=|s-t|^{2H};\ s,\ t\geq~0,\] $B$ has H\"older continuous paths of order $H-\varepsilon$,
 for all $\varepsilon\in(0,H)$.

 Fix a time interval $[0, T]$. We denote by $\cal{H}$ the canonical Hilbert space associated to the  fractional
Brownian motion $B$. That is, $\cal{H}$ is the closure of the linear span $\mathcal{E}$ generated by the indicator functions
 $\ind{[0,t]},\ t\in[0,T] $ with respect to the scalar product \[\langle \ind{[0,t]},\ind{[0,s]}\rangle=R_H(t,s).\]
We denote by $|\cdot|_{\cal H}$ the associated norm. The mapping
$\displaystyle \1_{[0,t]} \mapsto B_{t}$ can be extended to an
isometry between $\mathcal H$ and the Gaussian space associated with
$B$. We denote this isometry by
\begin{equation*}
\varphi\mapsto B(\varphi)=\int_0^T \varphi(s)\, dB_s.
\end{equation*}
When $H>\frac{1}{2}$ the elements of $\cal{H}$ may be not functions but distributions of negative order (see Pipiras and Taqqu \cite{PT}). Therefore, it is of interest to know significant subspaces of functions contained in it.\\
Let $|\cal{H}|$  be the set of measurable functions  $\varphi$   on $[0, T]$ such that
\[\|\varphi\|_{|\cal{H}|}^2:=H(2H-1)\int_0^T\int_0^T|\varphi(u)||\varphi(v)||u-v|^{2H-2}dudv<\infty.\] Note that, if $\varphi,\ \psi\in|\cal{H}|$, then
\[\E\left(B(\varphi)B(\psi)\right)=H(2H-1)\int_0^T\int_0^T\varphi(u)\psi(v)|u-v|^{2H-2}dudv.\]
It follows actually from Pipiras and Taqqu \cite{PT} that the space $|\cal{H}|$ is a Banach space for the norm $\|.\|_{|\cal{H}|}$ and it is included in $\cal{H}$.
Moreover one has
\begin{eqnarray}\label{inclusions} L^2([0, T]) \subset L^{\frac{1}{H}}([0, T])\subset|\cal{H}|\subset\cal{H}.
\end{eqnarray}

Let
$\mathrm{C}_b^{\infty}(\R^n,\R)$
 be the class of infinitely
differentiable functions $f: \R^n \longrightarrow \R$ such that $f$ and all its partial derivatives are bounded.
We denote by $\cal{S}$ the class of  cylindrical random
variables $F$ of the form
\begin{eqnarray}F = f(B(\varphi_1),...,B(\varphi_n)),\label{functional}\end{eqnarray} where $n\geq1$, $f\in \mathrm{C}_b^{\infty}(\R^n,\R)$
 and $\varphi_1,...,\varphi_n\in\cal{H}.$\\
 The derivative operator $D$ of a  cylindrical random variable $F$ of the form (\ref{functional}) is
defined as the $\cal{H}$-valued random variable
$$D_tF=\sum_{i=1}^{N}\frac{\partial f}{\partial x_i}(B(\varphi_1),...,B(\varphi_n))\varphi_i(t).$$
In this way the derivative $DF$ is an element of $L^2(\Omega
;\cal{H})$. For $p\geq1$, let $D^{1,p}$ be the closure of $\mathcal{S}$ with
respect to the norm defined by
$$\|F\|_{1,p}^p=\E(\|F\|^p)+\E(\|DF\|^p_{{\cal{H}}}).$$
The divergence operator $\delta$ is the adjoint of the derivative
operator $D$. Concretely, a random variable $u\in
L^2(\Omega;\cal{H})$ belongs to the domain of the divergence
operator $Dom(\delta)$ if for every $F\in \mathcal{S}$,
\[\E\left|\langle DF,u\rangle_{\cal{H}}\right|\leq c\|F\|_{L^2(\Omega)}.\]
In this case $\delta(u)$ is given by the duality relationship
\begin{eqnarray*}\E(F\delta(u))=\E\left<DF,u\right>_{\cal{H}}
\end{eqnarray*}
for any $F\in D^{1,2}$. We will
make use of the notation
$$\delta(u)=\int_0^Tu_s d B_s,\quad u\in Dom(\delta).$$
In particular, for $h\in\cal{H}$, $B(h)=\delta(h)=\int_0^Th_s d B_s.$

Assume that $H\in(\frac{1}{2},1)$. If $p\geq1$ and $u\in
D^{1,p}(|{\cal{H}}|)$  then $u$ belongs to $Dom(\delta)$ and we have
(see Nualart \cite[Page 292]{Nualart-book})
\begin{eqnarray*}\E(|\delta(u)|^p)\leq
c\left(\|\E(u)\|_{|{\cal{H}}|}^p+\E\left(\|Du\|_{|{\cal{H}}|\otimes|{\cal{H}}|}^p
\right)\right),
\end{eqnarray*}where the constant $c $ depends only on $p$ and $H$.\\
As a consequence, applying (\ref{inclusions}) it comes
\begin{eqnarray}\label{majoration second moment of skorohod}
\E(|\delta(u)|^p)\leq c\left(\|\E(u)\|_{L^{\frac{1}{H}}([0,
T])}^p+\E\left(\|Du\|_{L^{\frac{1}{H}}([0, T]^2)}^p \right)\right).
\end{eqnarray}

For every $n\geq1$, let ${\cal{H}}_n$ be the nth Wiener chaos of $B$, that is, the closed linear subspace of $L^2(\Omega)$ generated by the random variables $\{H_n(B(h)), h\in{\cal{H}}, \|h\|_{{\cal{H}}} = 1\}$ where $H_n$
is the nth Hermite polynomial. The mapping $I_n(h^{\otimes n})=n!H_n(B(h))$ provides a linear isometry between the symmetric tensor product ${\cal{H}}^{\odot n}$
(equipped with the modified norm $\|.\|_{{\cal{H}}^{\odot n}}=\frac{1}{\sqrt{n!}}\|.\|_{{\cal{H}}^{\otimes n}}$) and ${\cal{H}}_n$. For every  $f,g\in{{\cal{H}}}^{\odot n}$
the following multiplication formula holds:
\[\E\left(I_n(f)I_n(g)\right)=n!\langle f,g\rangle_{{\cal{H}}^{\otimes n}}.\]
 On the other hand, it is well-known   that $L^2(\Omega)$ can be decomposed into the infinite orthogonal sum of the spaces ${\cal{H}}_n$. That is, any square
integrable random variable $F\in L^2(\Omega)$  admits the following chaotic expansion
\[F=\E(F)+\sum_{n=1}^{\infty}I_n(f_n),\]
where the $f_n \in{{\cal{H}}}^{\odot n}$  are uniquely determined by
$F$.\\
Let $\{e_n, n\geq 1\}$ be a complete orthonormal system in
${\cal{H}}$. Given $f\in{\cal{H}}^{\odot p}$ and
$g\in{\cal{H}}^{\odot q}$, for every $r=0, \dots, p\wedge q$, the
$r$th contraction of $f$ and $g$ is the element of
${\cal{H}}^{\otimes(p+q-2r)}$ defined as
$$
f\otimes_r g=\sum_{i_1=1, \dots, i_r=1}^{\infty} \langle
f,e_{i_1}\otimes\cdots\otimes e_{i_r}\rangle_{{\cal{H}}^{\otimes r}}
\otimes \langle g,e_{i_1}\otimes\cdots\otimes
e_{i_r}\rangle_{{\cal{H}}^{\otimes r}}.
$$
In particular, note that $f\otimes_0g=f\otimes g$ and when $p=q$,
that $f\otimes_pg=\langle f, g\rangle_{{\cal{H}}^{\otimes p}}$.
 Since, in
general, the contraction $f\otimes_rg$ is not necessarily symmetric,
we denote its symmetrization by $f\widetilde\otimes_rg \in
{\cal{H}}^{\odot(p+q-2r)}$. When $f\in {\cal{H}}^{\odot q}$, we
write $I_q(f)$ to indicate its $q$th multiple integral with respect
to $X$. The following formula is useful to compute the product of
such multiple integrals: if $f\in {\cal{H}}^{\odot p}$ and $g\in
{\cal{H}}^{\odot q}$, then
\begin{equation}
\label{eq:multiplication} I_p(f)I_q(g)=\sum_{r=0}^{p\wedge q} r!
\left(\!\!\begin{array}{c}p\\r\end{array}\!\!\right)
\left(\!\!\begin{array}{c}q\\r\end{array}\!\!\right)
I_{p+q-2r}(f\widetilde\otimes_rg).
\end{equation}
Let us now recall the criterion of Ibragimov and Lifshits \cite{IL2000} which plays  a crucial role in Bercu \emph{et al.}\@ \cite{BNT} to study ASCLTs for sequences of functionals
of general Gaussian fields.
\begin{theorem}[Ibragimov and Lifshits \cite{IL2000}]\label{Ibragimov
Lifshits} Let $\{G_n\}$ be a sequence of random variables converging
in distribution towards a random variable $G_\infty$, and set
\[
\Delta_n(t)=\frac1{\log n}\sum_{k=1}^n \frac{1}k
\big(e^{itG_k}-\E(e^{itG_\infty})\big).
\]
Assuming for all $r>0$
\begin{equation*}
\sup_{|t|\leq r}\sum_n \frac{E\vert \Delta_n(t)\vert^2}{n\log
n}<\infty,
\end{equation*}
then, almost surely, for all continuous and bounded functions
$\varphi:\R\to\R$, one has
\[
\frac{1}{\log n}\sum_{k=1}^{n} \frac{1}{k}\,\varphi(G_k)
\limite{n\to \infty}{a.s.} \E(\varphi(G_\infty)).
\]
\end{theorem}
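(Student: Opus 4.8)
The statement is a general criterion, so the plan is to work entirely at the level of the random measures it implicitly governs. Introduce the (random) finite measures
\[
\mu_n=\frac{1}{\log n}\sum_{k=1}^n\frac{1}{k}\,\delta_{G_k},
\]
which have total mass $\frac{1}{\log n}\sum_{k=1}^n\frac1k\to 1$, so they are probability measures up to a deterministic $o(1)$ factor that I will absorb. Their characteristic functions are $\widehat{\mu_n}(t)=\frac{1}{\log n}\sum_{k=1}^n\frac1k e^{itG_k}$, and a direct computation gives $\Delta_n(t)=\widehat{\mu_n}(t)-\E(e^{itG_\infty})+o(1)$ with a deterministic, bounded error. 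The conclusion to be proved, namely $\int\varphi\,d\mu_n\to\E(\varphi(G_\infty))$ almost surely for every bounded continuous $\varphi$, is exactly the assertion that $\mu_n\Rightarrow\mathcal{L}(G_\infty)$ weakly, almost surely. By L\'evy's continuity theorem this follows once I show that, on a single event of probability one, $\Delta_n(t)\to0$ for every $t\in\R$.

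First I would fix $t$ and prove $\Delta_n(t)\to0$ almost surely from the single second-moment bound, using a subsequence-plus-interpolation argument. Fix $\rho>1$ and set $n_j=\lfloor e^{\rho^j}\rfloor$, so that over each block $B_j=\{n:n_j\le n<n_{j+1}\}$ one has $\log n\asymp\rho^j$ and $\sum_{n\in B_j}\frac{1}{n\log n}\asymp\rho-1$. Writing $c_n=\E|\Delta_n(t)|^2$, the hypothesis reads $\sum_j\big(\sum_{n\in B_j}\frac{c_n}{n\log n}\big)<\infty$, and bounding the inner sum below by $(\min_{n\in B_j}c_n)\sum_{n\in B_j}\frac1{n\log n}$ gives $\sum_j\min_{n\in B_j}c_n<\infty$. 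Hence, by a pigeonhole choice of one index $m_j\in B_j$ per block, $\sum_j\E|\Delta_{m_j}(t)|^2<\infty$, and Chebyshev together with Borel--Cantelli yield $\Delta_{m_j}(t)\to0$ almost surely. To fill the gaps I would set $S_n=\log n\cdot\Delta_n(t)$ and note that for $m_j\le n<m_{j+1}$ one has $|S_n-S_{m_j}|\le 2\sum_{m_j<k\le n}\frac1k\le C\rho^j(\rho^2-1)$ while $\log n\ge\rho^j$; dividing, $|\Delta_n(t)-\frac{\log m_j}{\log n}\Delta_{m_j}(t)|\le C(\rho^2-1)$, so that $\limsup_n|\Delta_n(t)|\le C(\rho^2-1)$ almost surely. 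Letting $\rho\downarrow1$ along a countable sequence and intersecting the corresponding full-measure events gives $\Delta_n(t)\to0$ almost surely.

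Next I would pass from "for each fixed $t$" to "simultaneously for all $t$". The fixed-$t$ result supplies, for each $t$ in a countable dense set $D\subset\R$, a null set off which $\Delta_n(t)\to0$; intersecting these countably many null sets produces one event $\Omega_0$ of probability one on which $\widehat{\mu_n}(t,\omega)\to\E(e^{itG_\infty})$ for every $t\in D$. On $\Omega_0$ I would first extract tightness of $\{\mu_n\}$ from the behaviour near the origin, via $\frac1\delta\int_{-\delta}^{\delta}(1-\mathrm{Re}\,\widehat{\mu_n}(t))\,dt\to\frac1\delta\int_{-\delta}^{\delta}(1-\mathrm{Re}\,\E(e^{itG_\infty}))\,dt$, which holds by bounded convergence since the integrands are dominated by $2$ and converge on the dense set $D$. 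Tightness plus convergence of characteristic functions on $D$ to $\E(e^{itG_\infty})$, a function continuous at $0$, then lets me invoke L\'evy's theorem to conclude $\mu_n(\omega)\Rightarrow\mathcal{L}(G_\infty)$ for every $\omega\in\Omega_0$, which is precisely the claimed convergence $\frac{1}{\log n}\sum_{k=1}^n\frac1k\varphi(G_k)\to\E(\varphi(G_\infty))$ for all bounded continuous $\varphi$.

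The main obstacle is the fixed-$t$ step: converting the lone variance bound into almost sure convergence. The delicacy is that neither a too-sparse nor a too-dense subsequence works directly; the geometric scale $\log n_j\asymp\rho^j$ is chosen so that the within-block oscillation of $\Delta_n(t)$ is only $O(\rho^2-1)$, while the pigeonhole selection of one good index per block is what secures summability of the second moments, and the final limit $\rho\downarrow1$ removes the oscillation. Everything afterwards (tightness, L\'evy's theorem, and the resulting weak convergence) is routine measure theory and needs only that convergence on the dense set $D$ holds on one common full-measure event.
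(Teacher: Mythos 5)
The paper contains no proof of Theorem~\ref{Ibragimov Lifshits} to compare against: it is quoted as a known result of Ibragimov and Lifshits \cite{IL2000}, so your attempt stands or falls on its own. Its first half is correct, and is the standard route: the blocks $n_j=\lfloor e^{\rho^j}\rfloor$, the pigeonhole choice of $m_j\in B_j$ giving $\sum_j\E|\Delta_{m_j}(t)|^2<\infty$, the oscillation bound $\left|\log n\,\Delta_n(t)-\log m_j\,\Delta_{m_j}(t)\right|\leq 2\sum_{m_j<k\leq n}k^{-1}\leq C\rho^{j}(\rho^2-1)$ for $m_j\leq n<m_{j+1}$, and the limit $\rho\downarrow 1$ along a countable sequence do yield $\Delta_n(t)\to 0$ almost surely for each fixed $t$.

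The genuine gap is in the passage to all $t$ simultaneously. You intersect null sets over a countable dense set $D$ and then claim tightness of $\{\mu_n\}$ because $\int_{-\delta}^{\delta}\big(1-\mathrm{Re}\,\widehat{\mu}_n(t)\big)\,dt$ converges ``by bounded convergence since the integrands converge on the dense set $D$''. But $D$ is countable, hence Lebesgue-null, and dominated convergence needs convergence at Lebesgue-almost every $t$; pointwise convergence of characteristic functions on a countable dense set controls neither these integrals nor tightness. Concretely, take $\mu_n=(1-a)\nu+a\,\delta_{n!}$ with $0<a<1$ and $D=2\pi\Q$: for $t=2\pi p/q$ one has $e^{itn!}=1$ as soon as $n\geq q$, so $\widehat{\mu}_n(t)\to(1-a)\widehat{\nu}(t)+a$ for every $t\in D$, yet $\int_{-\delta}^{\delta}\big(1-\mathrm{Re}\,\widehat{\mu}_n(t)\big)\,dt\to(1-a)\int_{-\delta}^{\delta}\big(1-\mathrm{Re}\,\widehat{\nu}(t)\big)\,dt+2a\delta$ and the mass $a$ escapes to infinity; the inference you invoke is simply false in general.

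The repair uses exactly the part of the hypothesis you never exploited, namely the supremum over $|t|\leq r$. The map $(t,\omega)\mapsto\Delta_n(t,\omega)$ is jointly measurable (continuous in $t$, measurable in $\omega$), so Tonelli gives
$\E\int_{-r}^{r}\sum_n\frac{|\Delta_n(t)|^2}{n\log n}\,dt\leq 2r\,\sup_{|t|\leq r}\sum_n\frac{\E|\Delta_n(t)|^2}{n\log n}<\infty$;
hence almost surely $\sum_n\frac{|\Delta_n(t)|^2}{n\log n}<\infty$ for Lebesgue-a.e.\ $t\in[-r,r]$, and, intersecting over $r\in\N$, for a.e.\ $t\in\R$. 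Your blocking and interpolation argument is purely deterministic once this pathwise summability holds, so on a single event of full probability $\Delta_n(t)\to 0$ for a.e.\ $t$. Now dominated convergence is legitimate, the truncation inequality $\mu_n\{|x|\geq 2/\delta\}\leq\frac{C}{\delta}\int_{-\delta}^{\delta}\big(\ell_n-\mathrm{Re}\,\widehat{\mu}_n(t)\big)\,dt$ (with $\ell_n=\mu_n(\R)\to 1$) yields almost sure tightness, and every subsequential weak limit has characteristic function equal a.e., hence everywhere by continuity, to $\E(e^{itG_\infty})$, which identifies the limit as the law of $G_\infty$. With that replacement, your argument is complete.
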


For the rest of the paper, we will use the standard notation
$\phi(z):= P(N \leq z)$ where $N$ is a $\mathcal{N }(0, 1)$ random
variable. We will denote by $C(\theta,H)$ a generic positive
constant which depends only on $\theta$ and $H$.

\section{Almost sure central limit theorems}

In this section we state and  prove our results concerning the ASCLT for the
sequences of $\R$-valued random variables of the form
$\{G_n/R_n\}_{n\geq1}$ and $\{G_n+R_n\}_{n\geq1}$.
\begin{theorem}\label{ASCLT quotient}Let $\{G_{n}\}_{n\geq 1}$ be a sequence of $\R$-valued random variables
satisfying the ASCLT.  Let $\{R_{n}\}_{n\geq 1}$ be a sequence of
positive random variables converging almost surely to $1$. Then
$\{G_n/R_n\}_{n\geq1}$ satisfies the ASCLT. In other words, if $N$
is a $\mathcal{N }(0, 1)$ random variable, then, almost surely, for
all $z \in\R$,
\begin{eqnarray*}
  \frac{1}{\log
n}\sum_{k=1}^{n}\frac{1}{k}\ind{G_k\leq
zR_k}\limite{n \to \infty}{a.s.} \phi(z).
 \end{eqnarray*}
 \end{theorem}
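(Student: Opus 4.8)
The plan is to exploit the almost sure convergence $R_n \to 1$ in order to sandwich the event $\{G_k \le zR_k\}$ between two events of the form $\{G_k \le z\pm\eta\}$, to which the assumed ASCLT for $\{G_n\}$ applies directly. First I would fix a single almost sure event on which both hypotheses hold. The assumed ASCLT for $\{G_n\}$, taken in its equivalent distribution-function form (valid because the Gaussian CDF $\phi$ is continuous, so that every real number is a continuity point), provides a probability-one event on which
\[
\frac{1}{\log n}\sum_{k=1}^n \frac1k\, \ind{G_k\le w}\longrightarrow \phi(w)\qquad\text{for every } w\in\R .
\]
Intersecting this with $\{R_n\to 1\}$, which also has probability one, yields an event $\Omega_0$ with $P(\Omega_0)=1$ on which both convergences are available simultaneously and for \emph{all} $w$, so that no uncountable union of null sets is incurred.

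Next, fix $\omega\in\Omega_0$, a point $z\in\R$, and $\eta>0$. Since $R_k(\omega)\to 1$ and $|zR_k-z|=|z|\,|R_k-1|$, there is a random but finite index $K$ such that $z-\eta< zR_k(\omega)< z+\eta$ for every $k\ge K$; this holds for both signs of $z$ and trivially when $z=0$. Because $R_k>0$ preserves the direction of the inequalities, for $k\ge K$ one obtains the inclusions $\{G_k\le z-\eta\}\subseteq\{G_k\le zR_k\}\subseteq\{G_k\le z+\eta\}$, that is
\[
\ind{G_k\le z-\eta}\ \le\ \ind{G_k\le zR_k}\ \le\ \ind{G_k\le z+\eta}.
\]

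I would then sum these inequalities against the weights $\tfrac{1}{k\log n}$. The finitely many terms with $k<K$ contribute at most $\tfrac{1}{\log n}\sum_{k=1}^{K-1}\tfrac1k$, which tends to $0$ as $n\to\infty$ because $K$ is fixed and $\log n\to\infty$. Passing to the limit and using the convergence on $\Omega_0$ at the two points $z-\eta$ and $z+\eta$ gives
\[
\phi(z-\eta)\ \le\ \liminf_n \frac{1}{\log n}\sum_{k=1}^n \frac1k\,\ind{G_k\le zR_k}\ \le\ \limsup_n \frac{1}{\log n}\sum_{k=1}^n \frac1k\,\ind{G_k\le zR_k}\ \le\ \phi(z+\eta).
\]

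Finally, letting $\eta\downarrow 0$ and invoking the continuity of $\phi$ at $z$ forces both the $\liminf$ and the $\limsup$ to equal $\phi(z)$, which establishes the claimed limit on $\Omega_0$, hence almost surely, for every $z$. I do not anticipate a genuine obstacle here: the argument is essentially a weighted, pathwise analogue of the Portmanteau/Slutsky reasoning, and the only point requiring care is the bookkeeping of the previous paragraph, namely securing the convergence at $z\pm\eta$ on one probability-one event independent of $z$ and $\eta$, which the distribution-function form of the ASCLT supplies.
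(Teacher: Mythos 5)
Your proof is correct and follows essentially the same route as the paper's: the paper isolates a quantitative version of your sandwich argument (Lemma~\ref{key1 lemma}), bounding the deviation by the ASCLT errors at the perturbed points $z(1\pm\varepsilon)$, plus the logarithmically averaged indicator of $\{|R_k-1|\geq \varepsilon\}$ (your finitely many indices $k<K$), plus $\varepsilon$, and then concludes by the same limiting argument you give. The only differences are cosmetic: you perturb additively ($z\pm\eta$), which avoids the paper's case analysis on the sign of $z$ and never actually uses the positivity of $R_k$, whereas the paper perturbs multiplicatively and invokes the Gaussian density bound $\left|\phi(z)-\phi(z(1\pm\varepsilon))\right|\leq\varepsilon$.
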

 \begin{theorem}\label{ASCLT sum}Let $\{G_{n}\}_{n\geq 1}$ be a sequence of $\R$-valued random variables
satisfying the ASCLT.  Let $\{R_{n}\}_{n\geq 1}$ be a sequence of
$\R$-valued random variables converging almost surely to $0$. Then
$\{G_n+R_n\}_{n\geq1}$ satisfies the ASCLT. In other words, almost
surely, for all $z \in\R$,
\begin{eqnarray*}
  \frac{1}{\log
n}\sum_{k=1}^{n}\frac{1}{k}\ind{G_k+R_k\leq
z}\limite{n \to \infty}{a.s.} \phi(z).
 \end{eqnarray*}
\end{theorem}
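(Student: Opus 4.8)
The plan is to deduce the ASCLT for $\{G_n+R_n\}$ from the ASCLT for $\{G_n\}$ by a sandwiching argument that exploits the almost sure convergence $R_n\to0$ together with the continuity of the limiting distribution function $\phi$. First I would fix a single event $\Omega_0$ of probability one on which \emph{both} of the following hold: the sequence $\{G_n\}$ satisfies the ASCLT in its indicator form, i.e.\ for every $z\in\R$,
\[
\frac{1}{\log n}\sum_{k=1}^{n}\frac{1}{k}\ind{G_k\leq z}\longrightarrow\phi(z)\quad(n\to\infty),
\]
and the numerical sequence $R_n$ converges to $0$. Recall that the indicator form of the ASCLT holds almost surely \emph{simultaneously} for all $z$, so the intersection of the two full-probability events is again of full probability; working on $\Omega_0$ is what lets me exchange the quantifiers and arrive at an almost sure statement valid for all $z$ at once.

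Next, I fix $\omega\in\Omega_0$, a real number $z$, and $\varepsilon>0$. Since $R_n(\omega)\to0$, there is an index $K=K(\omega,\varepsilon)$ with $|R_k(\omega)|<\varepsilon$ for all $k\geq K$. For such $k$ the inclusions
\[
\{G_k\leq z-\varepsilon\}\subseteq\{G_k+R_k\leq z\}\subseteq\{G_k\leq z+\varepsilon\}
\]
hold, hence
\[
\ind{G_k\leq z-\varepsilon}\leq\ind{G_k+R_k\leq z}\leq\ind{G_k\leq z+\varepsilon},\qquad k\geq K.
\]
Summing against the weights $1/k$ and normalizing by $\log n$, and observing that the finitely many discarded terms $k<K$ contribute at most $\frac{1}{\log n}\sum_{k=1}^{K-1}\frac1k\to0$, I obtain a two-sided bound relating $\frac{1}{\log n}\sum_{k=1}^n\frac1k\ind{G_k+R_k\leq z}$ to the corresponding averages for $\{G_k\leq z\pm\varepsilon\}$.

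Passing to the limit $n\to\infty$ and invoking the ASCLT for $\{G_n\}$ at the points $z-\varepsilon$ and $z+\varepsilon$ then yields
\[
\phi(z-\varepsilon)\leq\liminf_{n}\frac{1}{\log n}\sum_{k=1}^{n}\frac{1}{k}\ind{G_k+R_k\leq z}\leq\limsup_{n}\frac{1}{\log n}\sum_{k=1}^{n}\frac{1}{k}\ind{G_k+R_k\leq z}\leq\phi(z+\varepsilon).
\]
Finally I let $\varepsilon\downarrow0$ and use the continuity of $\phi$ (it is the distribution function of a standard Gaussian) to squeeze both the $\liminf$ and the $\limsup$ to $\phi(z)$. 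This gives the claimed convergence at every $z$ on $\Omega_0$, that is, almost surely for all $z$, which is exactly the ASCLT for $\{G_n+R_n\}$.

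As for difficulty: there is no serious analytic obstacle, since the argument is essentially a deterministic weak-convergence stability fact dressed up with the logarithmic weights. The one point deserving care is the order of quantifiers: I must start from the \emph{simultaneous-in-$z$} version of the ASCLT for $\{G_n\}$ rather than proving convergence for each fixed $z$ separately, so that the exceptional null set does not depend on $z$; continuity of $\phi$ is then what makes the $\varepsilon$-sandwich collapse to the right value. An alternative route, establishing convergence first for bounded Lipschitz test functions via $|\varphi(G_k+R_k)-\varphi(G_k)|\leq L\,|R_k|$ and a Toeplitz averaging argument, would also work but would require an extra step to return to the indicator formulation.
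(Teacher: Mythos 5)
Your proof is correct and is essentially the paper's own argument: the sandwich you set up for $k\geq K(\omega)$ rests on exactly the inclusions behind the paper's Lemma~\ref{key2 lemma}, namely $\{G_k\leq z-\eta\}\subset\{G_k+R_k\leq z\}\cup\{|R_k|>\eta\}$ and $\{G_k+R_k\leq z\}\subset\{G_k\leq z+\eta\}\cup\{|R_k|>\eta\}$, and your finitely-many-exceptional-indices step appears there as the term $\frac{1}{\log n}\sum_{k=1}^{n}\frac{1}{k}\ind{|R_k|>\eta}$, which vanishes almost surely for the same reason. The only cosmetic differences are that the paper packages the sandwich as a deterministic inequality valid for all $k$ before passing to the limit, and replaces your appeal to the continuity of $\phi$ by the explicit bound $|\phi(z\pm\eta)-\phi(z)|\leq\eta/\sqrt{2\pi}$ coming from the bounded Gaussian density.
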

\begin{remark} A similar result to Theorem~\ref{ASCLT sum} for the ASCLT of $\{G_n+R_n\}_{n\geq1}$ where
$\{R_{n}\}_{n\geq 1}$ converges in $L^2(\Omega)$ to zero, and such
that \begin{equation*} \sum_{n\geq2} \frac{1}{n\log^2
n}\sum_{k=1}^n\frac{1}{k}\E|R_k|^2<\infty
\end{equation*} was established by Nourdin and Peccati in \cite{NP}.
\end{remark}
The proofs of Theorem \ref{ASCLT quotient} and Theorem \ref{ASCLT
sum} are respectively  direct consequences of the two following lemmas:
\begin{lemma}\label{key1
lemma} Let $\{G_{n}\}_{n\geq 1}$ and  $\{R_{n}\}_{n\geq 1}$ be two
sequences of real-valued
 random variables. Define
\begin{eqnarray}
U_{n,\varepsilon}&:=&\left|\frac{1}{\log n}\sum_{k=1}^{n}\frac{1}{k}\ind{G_k
\leq z(1-\varepsilon)}-\phi(z(1-\varepsilon)) \right|,\\
V_{n,\varepsilon}&:=&\left|\frac{1}{\log
n}\sum_{k=1}^{n}\frac{1}{k}\ind{G_k\leq
z(1+\varepsilon)}-\phi(z(1+\varepsilon)) \right|.
\end{eqnarray} Then, for all $z\in\R$,
 $\varepsilon>0$
 \begin{eqnarray*}
  \left|\frac{1}{\log
n}\sum_{k=1}^{n}\frac{1}{k}\ind{G_k\leq z{R_k}}-\phi(z)
\right|&\leq&\max(U_{n,\varepsilon},
V_{n,\varepsilon})+\frac{1}{\log n}\sum_{k=1}^{n}\frac{1}{k}\ind{|R_k-1|\geq\varepsilon}+\varepsilon.
 \end{eqnarray*}
 \end{lemma}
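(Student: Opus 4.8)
The plan is to control the ratio indicator $\ind{G_k \leq zR_k}$ by sandwiching $R_k$ between $1-\varepsilon$ and $1+\varepsilon$, which is legitimate on the event where $|R_k-1|<\varepsilon$, and to absorb the complementary event into an error term. The key algebraic observation is that for $z \geq 0$, on the event $\{|R_k-1| < \varepsilon\}$ we have $z(1-\varepsilon) \leq zR_k \leq z(1+\varepsilon)$, so $\ind{G_k \leq z(1-\varepsilon)} \leq \ind{G_k \leq zR_k} \leq \ind{G_k \leq z(1+\varepsilon)}$ there. (For $z<0$ the inequalities for $zR_k$ reverse, but the structure of the argument is symmetric and yields the same final bound; I would note this and proceed with $z\geq 0$ for concreteness.)

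Concretely, I would first split each indicator $\ind{G_k \leq zR_k}$ as
\[
\ind{G_k \leq zR_k} = \ind{G_k \leq zR_k}\ind{|R_k-1|<\varepsilon} + \ind{G_k \leq zR_k}\ind{|R_k-1|\geq\varepsilon},
\]
bound the second piece crudely by $\ind{|R_k-1|\geq\varepsilon}$, and bound the first piece above by $\ind{G_k \leq z(1+\varepsilon)}$ and below by $\ind{G_k \leq z(1-\varepsilon)}$ (again using $z\geq 0$). Averaging against the weights $\frac{1}{k\log n}$ and subtracting $\phi(z)$, the upper bound becomes
\[
\frac{1}{\log n}\sum_{k=1}^n \frac1k \ind{G_k \leq z(1+\varepsilon)} - \phi(z) + \frac{1}{\log n}\sum_{k=1}^n \frac1k \ind{|R_k-1|\geq\varepsilon},
\]
and symmetrically for the lower bound with $z(1-\varepsilon)$.

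The remaining step is to reconcile the shifted targets $\phi(z(1\pm\varepsilon))$ with the desired target $\phi(z)$. I would add and subtract these shifted values: writing $\frac{1}{\log n}\sum \frac1k \ind{G_k \leq z(1+\varepsilon)} - \phi(z) = \big(\frac{1}{\log n}\sum \frac1k \ind{G_k \leq z(1+\varepsilon)} - \phi(z(1+\varepsilon))\big) + \big(\phi(z(1+\varepsilon)) - \phi(z)\big)$. The first bracket is exactly $\pm V_{n,\varepsilon}$ (bounded by $V_{n,\varepsilon}$ in absolute value), and the difference $\phi(z(1+\varepsilon)) - \phi(z)$ is what the stated bound implicitly controls — I expect the cleanest route is to bound $|\phi(z(1\pm\varepsilon)) - \phi(z)|$ by $\varepsilon$ using the mean value theorem together with $\sup_x |z|\phi'(xz) \leq |z|/\sqrt{2\pi}$; the statement writes the slack simply as $\varepsilon$, so I would check that the intended reading folds the Lipschitz constant into this term or that a sharper elementary estimate ($|\phi(a)-\phi(b)| \leq |a-b|/\sqrt{2\pi}$) gives exactly the claimed form after adjusting constants.

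The main obstacle, and the step deserving the most care, is the sign handling of $z$ together with making the $+\varepsilon$ term in the conclusion rigorous: one must verify that taking the maximum over the two one-sided bounds (upper using $V_{n,\varepsilon}$, lower using $U_{n,\varepsilon}$) correctly dominates $\big|\frac{1}{\log n}\sum \frac1k\ind{G_k\leq zR_k} - \phi(z)\big|$, since an absolute value is bounded by the larger of its upper and lower excursions. Once the deterministic inequality is established for fixed $z$ and $\varepsilon$, Theorem~\ref{ASCLT quotient} will follow by letting $n\to\infty$ (the max term vanishes a.s.\ by the ASCLT for $\{G_n\}$, and the $\ind{|R_k-1|\geq\varepsilon}$ average vanishes a.s.\ because $R_k\to 1$ a.s.\ forces $\ind{|R_k-1|\geq\varepsilon}=0$ eventually), then letting $\varepsilon\to 0$.
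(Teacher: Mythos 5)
Your overall architecture is the same as the paper's: splitting each indicator on the event $\{|R_k-1|<\varepsilon\}$ and bounding crudely by $\ind{|R_k-1|\geq\varepsilon}$ on its complement is exactly equivalent to the set inclusions the paper uses (e.g.\ $\{G_k\leq z(1-\varepsilon)\}\subset\{G_k\leq zR_k\}\cup\{R_k\leq 1-\varepsilon\}$ for $z\geq 0$), and your treatment of the sign of $z$ (the roles of $U_{n,\varepsilon}$ and $V_{n,\varepsilon}$ swapping, whence the maximum) is also the paper's.

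There is, however, a genuine gap at precisely the step you flagged but did not resolve: the estimate $|\phi(z(1\pm\varepsilon))-\phi(z)|\leq\varepsilon$ \emph{uniformly in} $z$. Both routes you propose --- the mean value theorem with the global bound $\sup\phi'=1/\sqrt{2\pi}$, and the Lipschitz estimate $|\phi(a)-\phi(b)|\leq|a-b|/\sqrt{2\pi}$ --- yield only $|z|\varepsilon/\sqrt{2\pi}$, which exceeds $\varepsilon$ as soon as $|z|>\sqrt{2\pi}$; since the lemma asserts the inequality for all $z\in\R$ with slack exactly $\varepsilon$, the factor $|z|$ cannot be ``folded into the constant''. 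The missing idea is that the mean value point sits at distance at least $|z|(1-\varepsilon)$ from the origin, so the Gaussian density there kills the factor $|z|$: for $z\geq0$ and $\varepsilon\in(0,1)$, using $xe^{-x^{2}/2}\leq e^{-1/2}$ for $x\geq 0$,
\begin{equation*}
\left|\phi(z)-\phi(z(1-\varepsilon))\right|\ \leq\ \min\left(\frac{1}{2},\ \frac{z\varepsilon}{\sqrt{2\pi}}\,e^{-z^{2}(1-\varepsilon)^{2}/2}\right)\ \leq\ \varepsilon,
\end{equation*}
where the last inequality follows by distinguishing $\varepsilon\geq\frac12$ (use the entry $\frac12$, valid because $z$ and $z(1-\varepsilon)$ lie on the same side of the origin, so both values of $\phi$ are in $[\frac12,1]$) from $\varepsilon<\frac12$ (then $1-\varepsilon>\frac12$, so the second entry is at most $\frac{2e^{-1/2}}{\sqrt{2\pi}}\varepsilon<\varepsilon$); the case $\varepsilon\geq1$ is disposed of separately and trivially, and the symmetric estimate with $1+\varepsilon$ handles $z\leq0$. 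This is exactly what the paper's proof does. Note that your weaker bound $|z|\varepsilon/\sqrt{2\pi}$ would still suffice for the intended application in Theorem~\ref{ASCLT quotient}, where $z$ is fixed before letting $n\to\infty$ and then $\varepsilon\to0$, but it does not prove the lemma as stated.
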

 \begin{lemma}\label{key2 lemma}
Let $\{S_{n}\}_{n\geq 1}$ and  $\{R_{n}\}_{n\geq 1}$ be two
sequences of real-valued
 random variables.  Define
\begin{eqnarray}
T_{n,\eta}&:=&\left|\frac{1}{\log n}\sum_{k=1}^{n}\frac{1}{k}{\bf
1}_{\{ {G_k} \leq z+\eta\}}-\phi(z+\eta) \right|,\\
W_{n,\eta}&:=&\left|\frac{1}{\log
n}\sum_{k=1}^{n}\frac{1}{k}\ind{G_k\leq
z-\eta}-\phi(z-\eta) \right|.
\end{eqnarray}  Then, for all for all $z\in\R$ and $\eta>0$
 \begin{eqnarray*}
  \left|\frac{1}{\log
n}\sum_{k=1}^{n}\frac{1}{k}\ind{G_k+R_k\leq z}-\phi(z)
\right|&\leq&\max(T_{n,\eta}, W_{n,\eta})+\frac{1}{\log
n}\sum_{k=1}^{n}\frac{1}{k}\ind{|R_k|>\eta}+\frac{\eta}{\sqrt{2\pi}}.
 \end{eqnarray*}
 \end{lemma}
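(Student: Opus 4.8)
The plan is to prove this as a purely pathwise (deterministic) inequality, valid for every fixed $\omega$, so that no probabilistic structure of $G_k$ or $R_k$ is used; only the ordering of real numbers and a Lipschitz bound on the standard Gaussian distribution function enter. The heart of the argument is to decompose the event $\{G_k+R_k\leq z\}$ according to whether the perturbation $R_k$ is small or large, and on the small-perturbation part to trap $\ind{G_k+R_k\leq z}$ between indicators depending on $G_k$ alone. This is the additive analogue of the multiplicative sandwiching used for Lemma~\ref{key1 lemma}.

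First I would establish two pointwise indicator inequalities. On the event $\{|R_k|\leq\eta\}$ one has $z-\eta\leq z-R_k\leq z+\eta$, so $G_k+R_k\leq z$ forces $G_k\leq z+\eta$, while $G_k\leq z-\eta$ forces $G_k+R_k\leq z$. Splitting $\ind{G_k+R_k\leq z}=\ind{G_k+R_k\leq z}\ind{|R_k|\leq\eta}+\ind{G_k+R_k\leq z}\ind{|R_k|>\eta}$ and bounding the last factor crudely by $\ind{|R_k|>\eta}$, this yields, for every $k$,
$$\ind{G_k\leq z-\eta}-\ind{|R_k|>\eta}\leq\ind{G_k+R_k\leq z}\leq\ind{G_k\leq z+\eta}+\ind{|R_k|>\eta}.$$

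Next I would sum these against the weights $\frac{1}{k\log n}$. The upper half gives
$$\frac{1}{\log n}\sum_{k=1}^n\frac1k\ind{G_k+R_k\leq z}\leq\frac{1}{\log n}\sum_{k=1}^n\frac1k\ind{G_k\leq z+\eta}+\frac{1}{\log n}\sum_{k=1}^n\frac1k\ind{|R_k|>\eta},$$
and by the very definition of $T_{n,\eta}$ the first term on the right is at most $\phi(z+\eta)+T_{n,\eta}$. The lower half, treated identically using $W_{n,\eta}$, produces the bound from below by $\phi(z-\eta)-W_{n,\eta}$ minus the same remainder sum $\frac{1}{\log n}\sum_{k=1}^n\frac1k\ind{|R_k|>\eta}$.

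It remains to absorb $\phi(z+\eta)-\phi(z)$ and $\phi(z)-\phi(z-\eta)$, and this is the one place a genuine estimate rather than set manipulation is needed, so it is the step I expect to matter most. Since $\phi$ is the standard normal distribution function, for $a\leq b$ one has $\phi(b)-\phi(a)=\frac{1}{\sqrt{2\pi}}\int_a^b e^{-t^2/2}\,dt\leq\frac{b-a}{\sqrt{2\pi}}$, because the density is bounded by $1/\sqrt{2\pi}$; hence $|\phi(z\pm\eta)-\phi(z)|\leq\eta/\sqrt{2\pi}$. Subtracting $\phi(z)$ from the summed inequalities, combining the upper and lower estimates, and retaining the larger of the two empirical errors $T_{n,\eta},W_{n,\eta}$ then gives exactly
$$\left|\frac{1}{\log n}\sum_{k=1}^n\frac1k\ind{G_k+R_k\leq z}-\phi(z)\right|\leq\max(T_{n,\eta},W_{n,\eta})+\frac{1}{\log n}\sum_{k=1}^n\frac1k\ind{|R_k|>\eta}+\frac{\eta}{\sqrt{2\pi}},$$
which is the claimed bound. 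The only structural difference from Lemma~\ref{key1 lemma} is that the Lipschitz control of $\phi$ is exercised at the symmetric shifts $z\pm\eta$ rather than at the multiplicative points $z(1\pm\varepsilon)$.
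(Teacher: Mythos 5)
Your proof is correct and follows essentially the same route as the paper: the pointwise indicator bounds $\ind{G_k\leq z-\eta}-\ind{|R_k|>\eta}\leq\ind{G_k+R_k\leq z}\leq\ind{G_k\leq z+\eta}+\ind{|R_k|>\eta}$ are just the indicator form of the set inclusions $\{G_k+R_k\leq z\}\subset\{G_k\leq z+\eta\}\cup\{|R_k|>\eta\}$ and $\{G_k\leq z-\eta\}\subset\{G_k+R_k\leq z\}\cup\{|R_k|>\eta\}$ used in the paper, and the remaining steps (Ces\`aro-type summation against the weights $\frac{1}{k\log n}$, the Lipschitz bound $|\phi(z\pm\eta)-\phi(z)|\leq\eta/\sqrt{2\pi}$, and taking the maximum of $T_{n,\eta}$ and $W_{n,\eta}$) coincide exactly. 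No gaps.
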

 \begin{proof}[Proof of Lemma \ref{key1 lemma}]
 It is inspired from Michael and Pfanzagl \cite[Lemma 1, Page
 78]{MP}. The case $\varepsilon \geq1$ is easy. We now
 assume that $\varepsilon\in(0,1)$.\\
 When $z\geq0$, using the inclusion $\{{G_k}\leq(1-\varepsilon)z\}\subset\{{G_k}\leq zR_k\}\cup
\{ R_k\leq 1-\varepsilon\}$ it comes
\begin{eqnarray}\label{equa1indicator}\ind{G_k\leq
z(1-\varepsilon)}\leq \ind{G_k\leq z R_k} + \ind{
|R_k-1|\geq\varepsilon}. \end{eqnarray}
Since for every
$x\geq0$, $xe^{\frac{-x^2}{2}}\leq e^{\frac{-1}{2}}$, one gets
\begin{eqnarray}\left|\phi(z)-\phi(z(1-\ep))\right|&\leq& \min \left(\frac12,
\frac{z\varepsilon}{\sqrt{2\pi}}e^{\frac{-z^2(1-\ep)^2}{2}}\right)\nonumber\\&\leq&\ep\label{equa1phi}.
\end{eqnarray}
Combining (\ref{equa1indicator}) and (\ref{equa1phi}), one has
 \begin{eqnarray*}\frac{1}{\log
n}\sum_{k=1}^{n}\frac{1}{k}\ind{G_k \leq
z R_k}-\phi(z)&\geq& -U_{n,\varepsilon}-\frac{1}{\log
n}\sum_{k=1}^{n}\frac{1}{k}\ind{|R_k-1|\geq\varepsilon
}-\varepsilon.
\end{eqnarray*}
Now when $z\leq0$, the inclusion $\{{G_k}\leq(1+\varepsilon)z\}\subset\{{G_k}\leq zR_k\}\cup
\{ R_k\geq1+\varepsilon\}$ leads to
\[\ind{G_k
\leq z(1+\varepsilon)}\leq \ind{G_k\leq z R_k} + \ind{|R_k-1|\geq\varepsilon}.\] Moreover since
\begin{eqnarray*}\left|\phi(z)-\phi(z(1+\ep))\right|&\leq&
\frac{|z|\varepsilon}{\sqrt{2\pi}}e^{\frac{-z^2(1+\ep)^2}{2}}\\&\leq&\ep,
\end{eqnarray*}
it comes
 \begin{eqnarray*}\frac{1}{\log
n}\sum_{k=1}^{n}\frac{1}{k}\ind{G_k\leq
z R_k}-\phi(z)&\geq& - V_{n,\varepsilon}-\frac{1}{\log
n}\sum_{k=1}^{n}\frac{1}{k}\ind{|R_k-1|\geq\varepsilon
}-\varepsilon.
\end{eqnarray*}
Thus, for every $z\in\R$
\begin{eqnarray*}\frac{1}{\log
n}\sum_{k=1}^{n}\frac{1}{k}\ind{G_k\leq
z R_k}-\phi(z)&\geq& -\max(U_{n,\varepsilon},
V_{n,\varepsilon})-\frac{1}{\log n}\sum_{k=1}^{n}\frac{1}{k}\ind{|R_k-1|\geq\varepsilon}-\varepsilon.
\end{eqnarray*}
Following the same guidelines as above and using
\begin{eqnarray*}
\mbox{for }z\geq0 \quad \{{G_k}\leq zR_k\}\subset\{{G_k}\leq(1+\varepsilon)z\}\cup \{
R_k\geq1+\varepsilon\}, \\
\mbox{for }z\leq0 \quad \{{G_k}\leq zR_k\}\subset\{{G_k}\leq (1-\varepsilon)z\}\cup
\{R_k\leq 1-\varepsilon\} ,
\end{eqnarray*}
one gets, for every $z\in\R$
\begin{eqnarray*}\frac{1}{\log
n}\sum_{k=1}^{n}\frac{1}{k}\ind{G_k \leq
z R_k}-\phi(z)&\leq& \max(U_{n,\varepsilon},
V_{n,\varepsilon})+\frac{1}{\log n}\sum_{k=1}^{n}\frac{1}{k}\ind{|R_k-1|\geq\varepsilon}+\varepsilon.
\end{eqnarray*}
This completes the proof of Lemma \ref{key1 lemma}.
\end{proof}\\
 \begin{proof}[Proof of Lemma \ref{key2 lemma}] Fix $z\in\R$ and
  $\eta>0$. Remark that   \[\{{G_k}+R_k\leq z\}\subset\{{G_k}\leq z+\eta\}\cup
\{ |R_k|>\eta\}.\]
Thus it comes
\begin{eqnarray*}
   &&\frac{1}{\log
n}\sum_{k=1}^{n}\frac{1}{k}\ind{G_k+R_k\leq z}-\phi(z)
 \\&\leq& \frac{1}{\log n}\sum_{k=1}^{n}\frac{1}{k}\ind{G_k \leq z+\eta}-\phi(z+\eta)+\frac{1}{\log
n}\sum_{k=1}^{n}\frac{1}{k}\ind{|R_k|>\eta}+\phi(z+\eta)-\phi(z)\\&\leq&T_{n,\eta}+\frac{1}{\log
n}\sum_{k=1}^{n}\frac{1}{k}\ind{|R_k|>\eta}+\frac{\eta}{\sqrt{2\pi}}.
 \end{eqnarray*}
 On the other hand, it follows from the inclusion \[\{G_k \leq z-\eta\}\subset\{{G_k}+R_k\leq z \}\cup
\{ |R_k|>\eta\}\] that
\begin{eqnarray*}
   &&\frac{1}{\log
n}\sum_{k=1}^{n}\frac{1}{k}\ind{G_k+R_k\leq z}-\phi(z)\\
 &\geq& \frac{1}{\log n}\sum_{k=1}^{n}\frac{1}{k}\ind{G_k \leq z+\eta}-\phi(z-\eta)-\frac{1}{\log
n}\sum_{k=1}^{n}\frac{1}{k}\ind{|R_k|>\eta}+\phi(z-\eta)-\phi(z)\\&\geq&-W_{n,\eta}-\frac{1}{\log
n}\sum_{k=1}^{n}\frac{1}{k}\ind{|R_k|>\eta
}-\frac{\eta}{\sqrt{2\pi}}.
 \end{eqnarray*}
 The desired conclusion follows.\end{proof}

\section{Application to LSE for fractional Ornstein-Uhlenbeck process}
We first recall  a result of \cite{BNT} concerning the ASCLT for
multiple stochastic integrals.
\begin{theorem}[Bercu \emph{et al.\@} \cite{BNT}]\label{ThemBercu}Let $q \geq 2$ be an integer, and let $\{G_n\}_{n\geq1}$  be a
sequence of the form $G_n=I_q(f_n)$, with $f_n\in{{\cal{H}}^{\odot
q}}$. Assume that $\E[G^2_n] = q!\|f_n\|^2_{{\cal{H}}^{\otimes q}}  =
1$ for all $n$, and  that $G_n$ converges in distribution towards a standard gaussian. Moreover assuming
\begin{eqnarray}
        &&\sum_{n=2}^\infty \frac{1}{n \log^2n}\sum_{k=1}^n\frac{1}{k}\|f_k\otimes_r f_k\|_{{\cal{H}}^{\otimes 2(q-r)}} <
        \infty\ \mbox{ for every } 1 \leq r \leq q-1,\label{firstcondition ofBNT}\\ &&\sum_{n=2}^\infty \frac{1}{n
\log^3n}\sum_{k,l=1}^n\frac{\left|\langle
f_k,f_l\rangle_{{\cal{H}}^{\otimes q}}\right|}{kl}<
       \infty,\label{secondcondition ofBNT}
\end{eqnarray}
then $\{G_n\}_{n\geq1}$ satisfies an ASCLT. In other words, almost
surely, for all $z \in\R$,
\begin{eqnarray*}
  \frac{1}{\log
n}\sum_{k=1}^{n}\frac{1}{k}\ind{G_k\leq z}\limite{n \to \infty}{a.s.}
\phi(z),
 \end{eqnarray*}
or equivalently, almost surely, for any bounded and continuous
function $\varphi:\R\rightarrow\R,$
\[\frac{1}{\log(n)}\sum_{k=1}^{n}\frac{1}{k}\varphi(G_k)\limite{n \to \infty}{a.s.}\E\varphi(N).\]
\end{theorem}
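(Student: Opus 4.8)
The plan is to apply the Ibragimov--Lifshits criterion, Theorem~\ref{Ibragimov Lifshits}, taking $G_\infty=N$ standard Gaussian, which is legitimate because $G_n$ is assumed to converge in distribution to $N$. It then suffices to verify that for every $r>0$,
\[
\sup_{|t|\leq r}\sum_{n\geq 2}\frac{\E|\Delta_n(t)|^2}{n\log n}<\infty,\qquad \Delta_n(t)=\frac1{\log n}\sum_{k=1}^n\frac1k\big(e^{itG_k}-\psi_\infty(t)\big),
\]
where $\psi_\infty(t)=\E(e^{itN})=e^{-t^2/2}$. Setting $g_k(t)=e^{itG_k}-\psi_\infty(t)$ and expanding the squared modulus, one obtains
\[
\E|\Delta_n(t)|^2=\frac1{\log^2 n}\sum_{k,l=1}^n\frac1{kl}\,\E\big(g_k(t)\overline{g_l(t)}\big),
\]
so the whole statement reduces to a sufficiently sharp bound on the covariance-type quantity $\E(g_k(t)\overline{g_l(t)})$.

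Writing $\psi_j(t)=\E(e^{itG_j})$, the elementary identity
\[
\E\big(g_k(t)\overline{g_l(t)}\big)=\mathrm{Cov}\big(e^{itG_k},e^{-itG_l}\big)+\big(\psi_k(t)-\psi_\infty(t)\big)\overline{\big(\psi_l(t)-\psi_\infty(t)\big)}
\]
splits the task into a genuine covariance and a product of two one-dimensional deviations from the Gaussian. The heart of the proof, and the step I expect to be the main obstacle, is the Malliavin-calculus covariance estimate
\[
\big|\mathrm{Cov}\big(e^{itG_k},e^{-itG_l}\big)\big|\leq C(t)\Big(|\langle f_k,f_l\rangle_{{\cal H}^{\otimes q}}|+\sum_{r=1}^{q-1}\|f_k\otimes_r f_k\|_{{\cal H}^{\otimes 2(q-r)}}+\sum_{r=1}^{q-1}\|f_l\otimes_r f_l\|_{{\cal H}^{\otimes 2(q-r)}}\Big),
\]
with $C(t)$ polynomially bounded on $|t|\leq r$. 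To obtain it I would study the joint characteristic function $\Phi(s,t)=\E(e^{isG_k+itG_l})$ and differentiate in $s$. Malliavin integration by parts, using the identity $-DL^{-1}G_j=\frac1q DG_j$ valid on the $q$th Wiener chaos (with $L$ the Ornstein--Uhlenbeck operator and $D$ the Malliavin derivative), rewrites $\E(G_k e^{isG_k+itG_l})$ in terms of $\E(\frac1q\|DG_k\|_{\cal H}^2\,e^{isG_k+itG_l})$ and $\E(\frac1q\langle DG_k,DG_l\rangle_{\cal H}\,e^{isG_k+itG_l})$. Replacing each inner product by its mean, namely $\E G_k^2=1$ and $\E(G_kG_l)=q!\langle f_k,f_l\rangle_{{\cal H}^{\otimes q}}$, leaves a linear ODE in $s$ whose principal part integrates to the joint Gaussian characteristic function; the covariance parameter $q!\langle f_k,f_l\rangle$ of that Gaussian produces the $|\langle f_k,f_l\rangle|$ contribution above. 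The two remaining fluctuation terms are dominated, by Cauchy--Schwarz and the fact that the exponential factors have modulus one, by $\big\|\tfrac1q\|DG_k\|_{\cal H}^2-1\big\|_{L^2(\Omega)}$ and $\big\|\tfrac1q\langle DG_k,DG_l\rangle_{\cal H}-q!\langle f_k,f_l\rangle\big\|_{L^2(\Omega)}$. Expanding $\|DG_k\|_{\cal H}^2$ and $\langle DG_k,DG_l\rangle_{\cal H}$ by the product formula \eqref{eq:multiplication} and computing the resulting $L^2$ norms bounds these fluctuations linearly by the contraction norms appearing above; this contraction bookkeeping is the delicate point. The one-functional specialization of the same computation gives $|\psi_k(t)-\psi_\infty(t)|\leq C(t)\sum_{r=1}^{q-1}\|f_k\otimes_r f_k\|$, so in the identity the product term is a product of two small factors and is dominated by the covariance bound.

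It then remains to assemble the pieces. Inserting the covariance estimate into
\[
\sum_{n\geq 2}\frac{\E|\Delta_n(t)|^2}{n\log n}=\sum_{n\geq 2}\frac1{n\log^3 n}\sum_{k,l=1}^n\frac1{kl}\,\E\big(g_k(t)\overline{g_l(t)}\big)
\]
produces three families of terms. The terms carrying $|\langle f_k,f_l\rangle_{{\cal H}^{\otimes q}}|$ reproduce exactly the series in hypothesis \eqref{secondcondition ofBNT}, hence are finite. The terms carrying $\|f_k\otimes_r f_k\|$ depend only on $k$, so the double sum factors as $\big(\sum_{k=1}^n\frac1k\|f_k\otimes_r f_k\|\big)\big(\sum_{l=1}^n\frac1l\big)$; since $\sum_{l=1}^n\frac1l$ is of order $\log n$, one power of $\log n$ cancels and what remains is precisely the series in hypothesis \eqref{firstcondition ofBNT}, and symmetrically for the $l$-dependent terms. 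Because $C(t)$ is bounded on $|t|\leq r$, the supremum over $|t|\leq r$ is finite, the Ibragimov--Lifshits condition holds, and the asserted ASCLT follows; the equivalent formulation for bounded continuous $\varphi$ is the output of Theorem~\ref{Ibragimov Lifshits}.
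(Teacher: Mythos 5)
The paper offers no proof of this statement --- it is recalled verbatim from Bercu, Nourdin and Taqqu \cite{BNT} --- so the only meaningful comparison is with the proof in that reference, and your proposal reconstructs it faithfully: verification of the Ibragimov--Lifshits criterion (Theorem~\ref{Ibragimov Lifshits}), Malliavin integration by parts on joint characteristic functions via $-DL^{-1}G_j=\frac{1}{q}DG_j$, control of the fluctuations of $\frac{1}{q}\|DG_k\|_{{\cal{H}}}^2$ and $\frac{1}{q}\langle DG_k,DG_l\rangle_{{\cal{H}}}$ by contraction norms, and the final summation in which the self-contraction terms absorb a factor $\sum_{l=1}^n\frac{1}{l}\sim\log n$ to yield hypothesis (\ref{firstcondition ofBNT}) while the inner-product terms yield hypothesis (\ref{secondcondition ofBNT}). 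The one step you leave implicit --- dominating the cross-contractions coming from $\langle DG_k,DG_l\rangle_{{\cal{H}}}$ by self-contractions through the identity $\|f_k\otimes_r f_l\|_{{\cal{H}}^{\otimes 2(q-r)}}^2=\langle f_k\otimes_{q-r}f_k,\, f_l\otimes_{q-r}f_l\rangle_{{\cal{H}}^{\otimes 2r}}$ and Cauchy--Schwarz --- is standard and closes correctly, so the argument is sound and matches the original.
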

\subsection{Continuous case}In this subsection we apply Theorem
\ref{ASCLT quotient} to  a least squares
estimator for fractional Ornstein-Uhlenbeck processes based on continuous-time observations.\\
Let us consider the fractional Ornstein-Uhlenbeck process $X=\left\{X_t,
t\geq0\right\}$ given by the following linear stochastic
differential equation
\begin{eqnarray}\label{OU}X_0=0,\mbox{ and }\quad  dX_t=-\theta X_tdt+dB_t,\quad t\geq0,
\end{eqnarray}where   $B=\left\{B_t,
t\geq0\right\}$ is a fractional Brownian motion of Hurst index $H\in(\frac{1}{2},1)$ and $\theta$ is a real unknown parameter.
\\
Let $\widehat{\theta}_t$ be a least squares estimator (LSE) of
$\theta$, that is given by
\begin{eqnarray}\label{(LSE)} \widehat{\theta}_t =\frac{\int_0^tX_s\ d X_s}{\int_0^tX_s^2ds},\quad t>0.
\end{eqnarray}This LSE is obtained by the least squares technique, that is $ \widehat{\theta}_t$ (formally) minimizes
\[\theta \longmapsto \int_0^t\left|\dot{{X}}_s+\theta X_s\right|^2ds.\]
The linear equation (\ref{OU}) has the following explicit solution:
\begin{eqnarray}\label{explicit solution}X_t=e^{\theta t}\int_0^te^{-\theta s} d B_s,\qquad t>0,
\end{eqnarray}
Using the equation (\ref{OU}) and (\ref{explicit solution}) the LSE $\{\widehat{\theta}_t\}$ defined in (\ref{(LSE)}) can be written as
follows:
\begin{eqnarray}
\widehat{\theta}_t-\theta=-\frac{\int_0^tX_s d
B_s}{\int_0^tX_s^2ds} =-\frac{\int_0^td
B_se^{\theta s}\int_0^sd B_r e^{-\theta r} }{\int_0^tX_s^2ds}.
\end{eqnarray}
Thus, one has
\begin{eqnarray}\label{representation of LSE}
\sqrt{t}(\theta-\widehat{\theta}_t)=\frac{F_t}{\frac{1}{t}\int_0^tX_s^2ds},\quad
t>0,
\end{eqnarray}
where $$F_t:=I_2(f_t)$$ is a multiple integral of $f_t$  with
\[f_t(u,v) =\frac{1}{2\sqrt{t}}e^{-\theta|u-v|}\ind{[0,t]}^{\otimes2}(u,v).\]
Until the end of this paper we will use the following notation for all $t>0,$
\begin{eqnarray}\label{defSigma}\sigma_t=\lambda(\theta,H)\sqrt{\E(F_t^2)},\quad \mbox{with }\lambda(\theta,H):=\theta^{-2H}H\Gamma(2H).\end{eqnarray}

 We are now ready to
state the main result of this subsection. We first recall some
results of Hu and Nualart \cite{HN} needed throughout the paper:
\begin{eqnarray}\label{limit of F_T^2}
\E(F_{t}^2)\limite{t \to \infty}{} A(\theta,H),
\end{eqnarray}where $$ A(\theta,H)=\theta^{1-4H}\left(H^2(4H-1)
\left[\Gamma(2H)^2+\frac{\Gamma(2H)\Gamma(3-4H)\Gamma(4H-1)}{\Gamma(2-2H)}\right]\right).
$$Moreover,  for every $t\geq0$
\begin{eqnarray}\label{majoratioofDF-|DF|}
 \E\left[\left(\|DF_{t}\|_{{\cal{H}}}^2-\E\|DF_{t}\|_{{\cal{H}}}^2\right)^2\right]
\leq C(\theta,H) t^{8H-6},
\end{eqnarray}and as $t\rightarrow\infty$\begin{eqnarray}\label{limit in distribution of
F_T^2}F_t\overset{\rm distribution}{\longrightarrow} N\sim \mathcal{N }(0,
A(\theta,H)).\end{eqnarray}
At last, one has the convergence
\begin{equation}\label{convergence of dominator}
\frac{1}{t}\int_{0}^{t}X_{s}^{2}ds\limite{t \to \infty}{a.s.} \lambda(\theta,H).
\end{equation}
\begin{theorem}\label{ASCLT continuous case}Assume  $H\in(1/2,3/4)$.
  Then, almost surely,
for all $z\in\R$,
\[ \frac{1}{\log n}\sum_{k=1}^{n}\frac{1}{k}\ind{\frac{\sqrt{k}}{\sigma_k}(\theta-\widehat{\theta}_k)\leq
z}\limite{n \to \infty}{}\phi(z)\] or,
equivalently, almost surely, for any bounded and continuous function
$\varphi$
\[ \frac{1}{\log n}\sum_{k=1}^{n}\frac{1}{k}\varphi\left(\frac{\sqrt{k}}{\sigma_k}(\theta-\widehat{\theta}_k)\right)
\limite{n \to \infty}{} \E(\varphi(N)).\]
\end{theorem}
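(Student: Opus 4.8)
The plan is to recognise the normalised estimator as a ratio to which Theorem~\ref{ASCLT quotient} applies. Starting from the representation (\ref{representation of LSE}) and the normalisation (\ref{defSigma}), I would write
\[
\frac{\sqrt{k}}{\sigma_k}(\theta-\widehat{\theta}_k)=\frac{G_k}{R_k},
\qquad
G_k:=\frac{F_k}{\sqrt{\E(F_k^2)}}=I_2(g_k),\quad g_k:=\frac{f_k}{\sqrt{\E(F_k^2)}},
\]
where $R_k$ is the positive factor built from $\tfrac1k\int_0^kX_s^2\,ds$ and the constants in (\ref{defSigma}), chosen so that the identity holds. By construction $\E(G_k^2)=1$, and $G_k$ converges in law to $N\sim\mathcal N(0,1)$ thanks to (\ref{limit in distribution of F_T^2}) together with (\ref{limit of F_T^2}); moreover $R_k\to 1$ almost surely by (\ref{convergence of dominator}). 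Consequently, once I establish that the second-chaos sequence $\{G_k\}$ itself satisfies the ASCLT, Theorem~\ref{ASCLT quotient} delivers the ASCLT for $\{G_k/R_k\}$, which is exactly the claim. Thus the whole problem reduces to checking the two hypotheses (\ref{firstcondition ofBNT})--(\ref{secondcondition ofBNT}) of Theorem~\ref{ThemBercu} in the case $q=2$.

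For (\ref{firstcondition ofBNT}) only the contraction order $r=1$ occurs. I would use that for a double integral the contraction $f_k\otimes_1 f_k$ is already symmetric, so the variance computation underlying the fourth-moment theorem gives $\E\big[(\|DF_k\|_{\HH}^2-\E\|DF_k\|_{\HH}^2)^2\big]=c\,\|f_k\otimes_1 f_k\|_{\HH^{\otimes2}}^2$ for a universal constant $c$. Combining this with (\ref{majoratioofDF-|DF|}) yields $\|f_k\otimes_1 f_k\|_{\HH^{\otimes2}}\leq C(\theta,H)\,k^{4H-3}$, and since $\E(F_k^2)\to A(\theta,H)>0$ by (\ref{limit of F_T^2}) the same bound (up to a constant) holds for $\|g_k\otimes_1 g_k\|_{\HH^{\otimes2}}$. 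Because $H<3/4$ we have $4H-4<-1$, hence $\sum_{k\geq1}k^{4H-4}<\infty$; so $\sum_{k=1}^n\frac1k\|g_k\otimes_1 g_k\|_{\HH^{\otimes2}}$ is bounded in $n$, and (\ref{firstcondition ofBNT}) follows from $\sum_n(n\log^2 n)^{-1}<\infty$.

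The crux is (\ref{secondcondition ofBNT}), which requires a genuine decay estimate on the cross inner products. Writing them out via the $H>\tfrac12$ kernel representation,
\[
\langle f_k,f_l\rangle_{\HH^{\otimes2}}
=\frac{(H(2H-1))^2}{4\sqrt{kl}}\int_{[0,k]^2}\!\int_{[0,l]^2}
e^{-\theta|u-v|}e^{-\theta|u'-v'|}\,|u-u'|^{2H-2}|v-v'|^{2H-2}\,du\,dv\,du'\,dv',
\]
I would exploit the exponential factors to localise near the diagonals $u\approx v$ and $u'\approx v'$, and use the local integrability of $|u-u'|^{2H-2}$ (valid since $2H-2>-1$), to show that the quadruple integral grows only like the smaller horizon $k\wedge l$. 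After normalisation this gives the key estimate $|\langle g_k,g_l\rangle_{\HH^{\otimes2}}|\leq C(\theta,H)\sqrt{(k\wedge l)/(k\vee l)}$. Granting it, splitting into $k\leq l$ and using $\sum_{k=1}^l k^{-1/2}\sim 2\sqrt l$ shows $\sum_{k,l=1}^n\frac{|\langle g_k,g_l\rangle|}{kl}=O(\log n)$, whence $\sum_n\frac{1}{n\log^3 n}\sum_{k,l=1}^n\frac{|\langle g_k,g_l\rangle|}{kl}\leq C\sum_n\frac{1}{n\log^2 n}<\infty$, giving (\ref{secondcondition ofBNT}).

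With both conditions verified, Theorem~\ref{ThemBercu} shows that $\{G_k\}$ obeys the ASCLT, and Theorem~\ref{ASCLT quotient} transfers it to $\{G_k/R_k\}$, completing the argument; the equivalent statement for bounded continuous $\varphi$ then follows as in Theorem~\ref{ThemBercu}. I expect the diagonal-localisation estimate for $\langle g_k,g_l\rangle_{\HH^{\otimes2}}$ to be the main obstacle: it is the only ingredient not already packaged in the quoted results of Hu and Nualart, and controlling the fourfold singular integral \emph{uniformly in the two time horizons} $k,l$ is where the exponential decay of the kernel and the restriction $H<3/4$ must be used carefully.
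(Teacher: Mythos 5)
Your proposal follows essentially the same route as the paper's own proof: the same ratio decomposition $\frac{\sqrt{k}}{\sigma_k}(\theta-\widehat{\theta}_k)=G_k/R_k$ handled by Theorem~\ref{ASCLT quotient} together with (\ref{convergence of dominator}), the same reduction to Theorem~\ref{ThemBercu}, the same verification of (\ref{firstcondition ofBNT}) via the identity $\E\left[\left(\|DF_k\|_{\HH}^2-\E\|DF_k\|_{\HH}^2\right)^2\right]=16\|f_k\otimes_1f_k\|^2_{\HH^{\otimes2}}$ combined with (\ref{majoratioofDF-|DF|}), and the same key bound $|\langle f_k,f_l\rangle_{\HH^{\otimes2}}|\leq C(\theta,H)\sqrt{(k\wedge l)/(k\vee l)}$ driving (\ref{secondcondition ofBNT}). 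The only difference is in how that last bound is established: the paper uses a mean-value argument in the shorter horizon and splits $[0,l]^2$ into four regions, each dominated by the finite universal integral $\int_{[0,\infty)^3}e^{-\theta u}e^{-\theta|y-v|}y^{2H-2}|u-v|^{2H-2}\,du\,dv\,dy$ quoted from Hu and Nualart (finiteness of which is exactly where $H<3/4$ enters), and this is precisely the rigorous form of the diagonal-localisation estimate you sketch, so your plan is sound.
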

\begin{proof}
Let us consider, for each $t >0$,
\[G_t=\frac{1}{\sqrt{\E(F_t^2)}}F_{t}=\frac{1}{\sqrt{\E(F_t^2)}}I_2(f_t)\]
and\[R_t=\frac{1}{\lambda(\theta,H) {t}}\int_0^{t}X_s^2ds.\]
 Thus, (\ref{representation of LSE}) leads to
\[\frac{\sqrt{n}}{\sigma_n }(\theta-\widehat{\theta}_n)=G_n/R_n, \quad n\geq1.\]
It follows from  (\ref{convergence of dominator}) that $R_n$ converges almost surely to one as $n$ tends to $\infty$. Then,
 using Theorem~\ref{ASCLT quotient} it suffices to show
 that $\{{G_n} \}_{n\geq1}$ satisfies the ASCLT. To do that, it is sufficient to prove that $\{{G_n} \}_{n\geq1}$
satisfies the conditions of Theorem~\ref{ThemBercu}.
\\
The convergence of $G_n$ towards the standard Gaussian is a straightforward consequence of (\ref{limit
of F_T^2}) and (\ref{limit in distribution of F_T^2}). It remains to fulfill the conditions
(\ref{firstcondition ofBNT}) and (\ref{secondcondition ofBNT}).
Hence, we shall prove that
\begin{eqnarray}\label{first inequality}I=\sum_{n\geq2}\frac{1}{n\log^2(n)}\sum_{k=1}^n\frac{1}{k}\|f_k\otimes_1f_k\|_{\cal{H}^{\otimes2}}<\infty,
\end{eqnarray}
and
\begin{eqnarray}\label{second inequality}J=\sum_{n\geq2}\frac{1}{n\log^3(n)}\sum_{k,l=1}^n\frac{|<f_k,f_l>_{\cal{H}}|}{kl} <\infty.
\end{eqnarray}
Let us deal with the first convergence (\ref{first inequality}). For every
$t>0$, one has
\begin{eqnarray}\label{estimateofDF-|DF|}
\E\left[\left(\|DF_{t}\|_{{\cal{H}}}^2-\E\|DF_{t}\|_{{\cal{H}}}^2\right)^2\right]
=16\|f_t\otimes_1f_t\|^2_{\cal{H}^{\otimes2}}.
\end{eqnarray}
Combining (\ref{majoratioofDF-|DF|}) and (\ref{estimateofDF-|DF|}) it comes
\begin{eqnarray}I \leq C(\theta,H)
\sum_{n\geq2}\frac{1}{n\log^2(n)}\sum_{k=1}^n\frac{1}{k^{4-4H}}<\infty,
\end{eqnarray} since $H<3/4$.\\
Now, we prove (\ref{second inequality}). Let $k<l$, then for some
$k^*\in[0,k]$ we have
\begin{eqnarray*}|<f_k,f_l>_{\cal{H}}|&=&H^2(2H-1)^2\frac{1}{\sqrt{kl}}\int_{[0,k]^2}dxdu\ e^{-\theta|x-u|}\int_{[0,l]^2}dydv\
 e^{-\theta|y-v|}|x-y|^{2H-2}|u-v|^{2H-2}\\
 &=&2H^2(2H-1)^2\sqrt{\frac{k}{l}}\int_{[0,k^*]}du\ e^{-\theta|k^*-u|}\int_{[0,l]^2}dydv\
 e^{-\theta|y-v|}|k^*-y|^{2H-2}|u-v|^{2H-2}\\&:=&2H^2(2H-1)^2\sqrt{\frac{k}{l}}\left(D^{(1)} +D^{(2)} +D^{(3)} +D^{(4)} \right).
\end{eqnarray*}
Moreover, the first term can be bounded above by
\begin{eqnarray*} D^{(1)}&=& \int_{[0,k^*]}du\ e^{-\theta(k^*-u)}\int_{[0,k^*]^2}dydv\
 e^{-\theta|y-v|}(k^*-y)^{2H-2}|u-v|^{2H-2}\\&=&\int_{[0,k^*]^3}\ e^{-\theta u} \
 e^{-\theta|y-v|}y^{2H-2}|u-v|^{2H-2}dudvdy\\
 &\leq&\int_{[0,\infty)^3}\ e^{-\theta u} \
 e^{-\theta|y-v|}y^{2H-2}|u-v|^{2H-2}dudvdy<\infty.
\end{eqnarray*}The last inequality is a consequence of \cite[Proof of Lemma 5.3 of web-only Appendix]{HN}.\\
Following the same guidelines, one gets for the other terms
\begin{eqnarray*} D^{(2)}&=& \int_{[0,k^*]}du\ e^{-\theta(k^*-u)}\int_{[k^*,l]^2}dydv\
 e^{-\theta|y-v|}(y-k^*)^{2H-2}|u-v|^{2H-2}\\&=&\int_{[0,k^*]}du\ e^{-\theta u}\int_{[0,l-k^*]^2}dydv\
 e^{-\theta|y-v|}y^{2H-2}(u+v)^{2H-2}\\
 &\leq&\int_{[0,\infty)^3}\ e^{-\theta u} \
 e^{-\theta|y-v|}y^{2H-2}|u-v|^{2H-2}dudvdy<\infty,
\end{eqnarray*}
\begin{eqnarray*} D^{(3)}&=& \int_{[0,k^*]}du\ e^{-\theta(k^*-u)}\int_{[0,k^*]}dy\int_{[k^*,l]}dv\
 e^{-\theta|y-v|}(k^*-y)^{2H-2}|u-v|^{2H-2}\\&=&\int_{[0,k^*]}du\ e^{-\theta u}\int_{[0,k^*]}dy\int_{[0,l-k^*]}dv\
 e^{-\theta(y+v)}y^{2H-2}(u+v)^{2H-2}\\
 &\leq&\int_{[0,\infty)^3}\ e^{-\theta u} \
 e^{-\theta|y-v|}y^{2H-2}|u-v|^{2H-2}dudvdy<\infty,
\end{eqnarray*} and
\begin{eqnarray*} D^{(4)}&=& \int_{[0,k^*]}du\ e^{-\theta(k^*-u)}\int_{[k^*,l]}dy\int_{[0,k^*]}dv\
 e^{-\theta|y-v|}(y-k^*)^{2H-2}|u-v|^{2H-2}\\&=&\int_{[0,k^*]}du\ e^{-\theta u}\int_{[0,l-k^*]}dy\int_{[0,k^*]}dv\
 e^{-\theta(y+v)}y^{2H-2}|u-v|^{2H-2}\\
 &\leq&\int_{[0,\infty)^3}\ e^{-\theta u} \
 e^{-\theta|y-v|}y^{2H-2}|u-v|^{2H-2}dudvdy<\infty.
\end{eqnarray*}
Thus, we deduce that, for every $k<l$
\begin{eqnarray*}|<f_k,f_l>_{\cal{H}}|&=&C(\theta,H)
\sqrt{\frac{k}{l}}.
\end{eqnarray*}
Consequently it comes
\begin{eqnarray}J&\leq& C(\theta,H) \sum_{n\geq2}\frac{1}{n\log^3(n)}\sum_{l=1}^n\frac{1}{l^{3/2}} \sum_{k=1}^l\frac{1}{\sqrt{k}}\nonumber\\
&\leq& C(\theta,H)
\sum_{n\geq2}\frac{1}{n\log^3(n)}\sum_{l=1}^n\frac{1}{l}\nonumber\\
&\leq& C(\theta,H) \sum_{n\geq2}\frac{1}{n\log^2(n)}<\infty,\label{J
bound}
\end{eqnarray}
which concludes the proof.
\end{proof}
\subsection{Discrete case}
Consider the fractional Ornstein-Uhlenbeck process $X=\left\{X_t,
t\geq0\right\}$ defined in (\ref{OU}). Assume that the process $X$
is observed equidistantly in time with the step size $\Delta_n$:
$t_i=i\Delta_{n}, i=0,\ldots,n$, and $T_n=n\Delta_{n}$ denotes the
length of the `observation window'.
\\Let $\widetilde{\theta}_{n }$ be a least squares estimator defined  as
follows: $\widetilde{\theta}_{n}$ minimizes
\[
\theta\mapsto \sum_{i=1}^{n}\left|X_{t_i}-X_{t_{i-1}}+\theta
X_{t_{i-1}}\Delta_{n}\right|^2.
\] Thus
$\widetilde{\theta}_n$ is given by
\begin{equation}\label{estimator}
\widetilde{\theta}_{n}=-\frac{\sum_{i=1}^{n}
X_{t_{i-1}}(X_{t_{i}}-X_{t_{i-1}})}{\Delta_{n}\sum_{i=1}^{n}
X_{t_{i-1}}^2}.
\end{equation}
Using (\ref{OU}), one has
\begin{eqnarray}\label{representation estimator-theta}
\widetilde{\theta}_{n}-\theta&=&-\frac{\sum_{i=1}^{n}
X_{t_{i-1}}U_i}{\Delta_{n}\sum_{i=1}^{n} X_{t_{i-1}}^2}
\end{eqnarray}where
\begin{eqnarray*}\label{expression of U}
U_i=-\theta\int_{t_{i-1}}^{t_{i}}X_sds+B_{t_{i}}-B_{t_{i-1}},\qquad
 i=1,\ldots,n.
\end{eqnarray*}
In this subsection we first prove the strong consistency of
$\widetilde{\theta}_{n}$. Then, applying Theorem~\ref{ASCLT
quotient} and Theorem~\ref{ASCLT sum},  we prove the ASCLT for the
LSE $\widetilde{\theta}_n$.

For the strong consistency,  let
us state the following  direct consequence of the Borel-Cantelli
Lemma (see e.g.\@ \cite{KN}), which allows us to turn convergence
rates in the $p$-th mean into pathwise convergence rates.
\begin{lemma}\label{pathwise convergence}
Let $\gamma >0$ and $p_0 \in \mathbb{N}$. Moreover let $(Z_{n})_{n \in \N}$
be a sequence of random variables. If  for every $p \geq p_0$ there
exists a constant $c_p>0$  such that for all $n \in \N$,
$$(\E |Z_{n}|^{p})^{1/p} \leq c_p \cdot n^{-\gamma},$$  then
for all $\varepsilon > 0$ there  exists a random variable
$\eta_{\varepsilon}$ such that
$$ |Z_{n}| \leq \eta_{\varepsilon} \cdot n^{-\gamma + \varepsilon}
\quad \mbox{almost surely} $$ for all $n \in \N$.  Moreover,   $\E
|\eta_{\varepsilon}|^{p} < \infty$ for all $p \geq 1$.
\end{lemma}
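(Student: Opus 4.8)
The plan is to produce a single random variable that dominates all the $Z_n$ simultaneously. The natural candidate is
\[
\eta_\varepsilon := \sup_{n\in\N} |Z_n|\, n^{\gamma-\varepsilon},
\]
for which the desired pathwise inequality $|Z_n| \le \eta_\varepsilon\, n^{-\gamma+\varepsilon}$ holds for every $n$ and on every $\omega$ \emph{by construction}. Thus the entire content of the lemma reduces to showing that this $\eta_\varepsilon$ has finite moments of all orders; in particular finiteness of any one moment already forces $\eta_\varepsilon<\infty$ almost surely, which is what makes the bound meaningful.

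First I would control a high moment of $\eta_\varepsilon$ by replacing the supremum with a sum: for any $p$,
\[
\eta_\varepsilon^{\,p} = \sup_{n\in\N} |Z_n|^p\, n^{p(\gamma-\varepsilon)} \le \sum_{n\in\N} |Z_n|^p\, n^{p(\gamma-\varepsilon)}.
\]
Taking expectations, exchanging sum and expectation by Tonelli's theorem, and inserting the hypothesis $\E|Z_n|^p \le c_p^{\,p}\, n^{-p\gamma}$ yields
\[
\E\,\eta_\varepsilon^{\,p} \le c_p^{\,p} \sum_{n\in\N} n^{-p\varepsilon}.
\]
The series on the right converges precisely when $p\varepsilon>1$, so any integer $p$ with $p\ge p_0$ and $p>1/\varepsilon$ gives $\E\,\eta_\varepsilon^{\,p}<\infty$ for that particular exponent, and hence $\eta_\varepsilon<\infty$ almost surely.

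Finally, to upgrade to finiteness of every moment, I would fix an arbitrary target $p\ge 1$ and choose an integer $q \ge \max\{p_0,\,p,\,1/\varepsilon\}$ large enough that $q\varepsilon>1$; the estimate above gives $\E\,\eta_\varepsilon^{\,q}<\infty$, and Lyapunov's (Jensen's) inequality on the probability space $(\Omega,\mathcal F,P)$ then yields $\E\,\eta_\varepsilon^{\,p} \le (\E\,\eta_\varepsilon^{\,q})^{p/q}<\infty$. I do not anticipate a genuine obstacle: the only point requiring care is the compatibility of the two constraints on the exponent $q$, which must simultaneously satisfy $q\ge p_0$ (so that the assumed $L^q$-bound applies) and $q\varepsilon>1$ (so that $\sum_n n^{-q\varepsilon}$ converges), and both hold for all sufficiently large $q$. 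The heart of the argument is the sup-by-sum bound, which converts the tail-summability $\sum_n n^{-q\varepsilon}<\infty$ into a finite moment; everything else is routine bookkeeping.
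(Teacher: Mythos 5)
Your proof is correct and complete. One point of comparison: the paper does not actually prove this lemma --- it is stated as a ``direct consequence of the Borel-Cantelli Lemma'' with a pointer to \cite{KN} --- so the relevant benchmark is that reference, and your argument is essentially the one given there: set $\eta_\varepsilon=\sup_{n}n^{\gamma-\varepsilon}|Z_n|$, bound the supremum by the sum, apply Tonelli and the moment hypothesis to obtain $\E\,\eta_\varepsilon^{q}\le c_q^{q}\sum_{n}n^{-q\varepsilon}<\infty$ for one integer $q\ge p_0$ with $q\varepsilon>1$, then extend to all $p\ge 1$ by Lyapunov's (Jensen's) inequality. It is worth noting that your route is in fact cleaner than the one the paper's phrasing suggests: a literal Borel--Cantelli argument (Markov's inequality gives $P(|Z_n|>n^{-\gamma+\varepsilon})\le c_p^{p}n^{-p\varepsilon}$, summable once $p\varepsilon>1$) only yields a random index $n_0(\omega)$ beyond which $|Z_n|\le n^{-\gamma+\varepsilon}$, and to control the moments of the random constant needed to absorb the finitely many exceptional indices one ends up invoking your sup-by-sum estimate anyway. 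Your argument delivers the pathwise domination (indeed for every $\omega$, not merely almost surely) and the moment bounds $\E\,\eta_\varepsilon^{p}<\infty$ for every $p\ge1$ in a single stroke; the two constraints you flag on the exponent, namely $q\ge\max\{p_0,p\}$ and $q\varepsilon>1$, are indeed simultaneously satisfiable for all large $q$, so there is no gap.
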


From now on, assume that $\Delta_n=t_{k+1}-t_{k}=
n^{-\alpha}$ with a given $\alpha \in (\frac{1}{2H+1},1)$.

Let us now prove the strong consistency of $\widetilde{\theta}_n$.
\begin{theorem}\label{strong consistency}Assume $H\in(1/2,1)$. Then, if $\Delta_{n} \rightarrow 0$ and $n\Delta_{n}\rightarrow
\infty$  as $n\rightarrow \infty$,  \begin{eqnarray}\widetilde{\theta}_n \limite{n \to \infty}{a.s.}\theta.
\end{eqnarray}
\end{theorem}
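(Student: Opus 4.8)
The plan is to write $\widetilde{\theta}_n-\theta$ as minus a ratio and to analyse its numerator and denominator separately after normalising both by the length of the observation window $T_n:=n\Delta_n$, which tends to $\infty$ by hypothesis. Starting from the representation (\ref{representation estimator-theta}), I would write
\[
\widetilde{\theta}_{n}-\theta=-\frac{\frac{1}{T_n}\sum_{i=1}^{n}X_{t_{i-1}}U_i}{\frac{1}{T_n}\Delta_{n}\sum_{i=1}^{n}X_{t_{i-1}}^2},
\]
so that the whole statement reduces to two claims: the denominator converges almost surely to $\lambda(\theta,H)>0$, and the numerator converges almost surely to $0$. Given these, the ratio tends to $0$ almost surely and the conclusion follows. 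Throughout I would use that $X$ inherits the H\"older regularity of $B$ (paths of order $H-\varepsilon$) and that the amplitude $\sup_{s\le T_n}|X_s|$ grows only slowly.

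For the denominator I would compare the Riemann sum with the integral, splitting
\[
\frac{1}{T_n}\Delta_{n}\sum_{i=1}^{n}X_{t_{i-1}}^2=\frac{1}{T_n}\int_0^{T_n}X_s^2\,ds+\frac{1}{T_n}\sum_{i=1}^{n}\int_{t_{i-1}}^{t_i}\bigl(X_{t_{i-1}}^2-X_s^2\bigr)\,ds .
\]
The first term converges almost surely to $\lambda(\theta,H)$ by (\ref{convergence of dominator}) evaluated along $T_n\uparrow\infty$. For the discretisation error I would bound $|X_{t_{i-1}}^2-X_s^2|\le|X_{t_{i-1}}-X_s|\,(|X_{t_{i-1}}|+|X_s|)$ and invoke H\"older continuity, obtaining an error of order $\Delta_n^{H-\varepsilon}$ up to a slowly growing random factor; an $L^p$ version of this bound together with Lemma~\ref{pathwise convergence} upgrades the convergence to the almost sure sense.

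For the numerator I would decompose $U_i=\theta\int_{t_{i-1}}^{t_i}(X_{t_{i-1}}-X_s)\,ds+(B_{t_i}-B_{t_{i-1}})$, giving a drift part and a noise part. The drift part $\frac{\theta}{T_n}\sum_i X_{t_{i-1}}\int_{t_{i-1}}^{t_i}(X_{t_{i-1}}-X_s)\,ds$ is handled pathwise by H\"older continuity, since $|\int_{t_{i-1}}^{t_i}(X_{t_{i-1}}-X_s)ds|\lesssim\Delta_n^{1+H-\varepsilon}$, so that, after summing the $n$ terms and dividing by $T_n$, it is $\lesssim\Delta_n^{H-\varepsilon}\to0$ almost surely. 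The noise part $M_n:=\frac{1}{T_n}\sum_{i=1}^{n}X_{t_{i-1}}(B_{t_i}-B_{t_{i-1}})$ is the genuine difficulty: I would establish an $L^p$ rate of the form $(\E|M_n|^p)^{1/p}\le c_p\,n^{-\gamma}$ for some $\gamma>0$ by expressing each weighted increment through the divergence operator, controlling the Skorohod contribution with the moment estimate (\ref{majoration second moment of skorohod}) and hypercontractivity on the Wiener chaoses, and separately estimating the deterministic trace/discretisation contribution coming from the non-trivial correlation of $X_{t_{i-1}}$ with the forward increment. Lemma~\ref{pathwise convergence} then promotes this rate to almost sure convergence $M_n\to0$.

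The main obstacle is precisely this moment analysis of $M_n$: because $B$ has $H>\tfrac12$, its increments are positively and long-range dependent, so the double sum $\sum_{i,j}\E[X_{t_{i-1}}X_{t_{j-1}}(B_{t_i}-B_{t_{i-1}})(B_{t_j}-B_{t_{j-1}})]$ does not telescope and the near-diagonal deterministic bias must be tracked with care. It is exactly here that the lower bound $\alpha>\frac{1}{2H+1}$ on the mesh exponent enters, ensuring that the combined discretisation rate is fast enough (summable) for the Borel--Cantelli argument behind Lemma~\ref{pathwise convergence} to apply, while $\alpha<1$ keeps $T_n\to\infty$ so that the ergodic limit (\ref{convergence of dominator}) for the denominator remains available.
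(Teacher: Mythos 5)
Your overall architecture is exactly the paper's: the ratio representation (\ref{representationDiscretLSE}), the almost sure limit $\frac1n\sum_{i=1}^nX_{t_{i-1}}^2\to\lambda(\theta,H)$ obtained by comparing the Riemann sum with $\frac1{T_n}\int_0^{T_n}X_s^2\,ds$ via (\ref{moments of X}), (\ref{convergence of dominator}) and Lemma~\ref{pathwise convergence}, and the splitting of the numerator into a drift part and a noise part, with $L^p$ rates promoted to almost sure rates by the same lemma. The genuine gap is in the one step you yourself isolate as ``the genuine difficulty'': the bound $(\E|M_n|^p)^{1/p}\le c_p n^{-\gamma}$ for $M_n=\frac1{T_n}\sum_{i=1}^nX_{t_{i-1}}(B_{t_i}-B_{t_{i-1}})$ is false, for every $\gamma>0$ and every $\alpha\in(0,1)$, because $M_n$ carries a non-vanishing deterministic bias. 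Decompose, as you propose,
\[
X_{t_{i-1}}(B_{t_i}-B_{t_{i-1}})=\delta\bigl(X_{t_{i-1}}\1_{(t_{i-1},t_i]}\bigr)+\bigl\langle DX_{t_{i-1}},\1_{(t_{i-1},t_i]}\bigr\rangle_{\HH}.
\]
The divergence part is indeed negligible after summation and normalization by $T_n$; it can be handled with (\ref{majoration second moment of skorohod}) and hypercontractivity exactly as you say. But the trace part is the covariance $\E\bigl[X_{t_{i-1}}(B_{t_i}-B_{t_{i-1}})\bigr]$, and for the ergodic solution $X_t=\int_0^te^{-\theta(t-u)}dB_u$ it equals
\[
H(2H-1)\int_0^{t_{i-1}}\!\int_{t_{i-1}}^{t_i}e^{-\theta(t_{i-1}-u)}(v-u)^{2H-2}\,dv\,du
=\theta^{1-2H}H\Gamma(2H)\,\Delta_n\bigl(1+o(1)\bigr)=\theta\,\lambda(\theta,H)\,\Delta_n\bigl(1+o(1)\bigr)
\]
as $t_{i-1}\to\infty$ and $\Delta_n\to0$ (each term is in any case at most $C(\theta,H)\Delta_n$, so the indices with small $t_{i-1}$ form a vanishing proportion of the average below). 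For $H>\frac12$ the forward increment is positively correlated with the whole past of $B$, hence with $X_{t_{i-1}}$; these order-$\Delta_n$ covariances do not cancel, and summing $n$ of them and dividing by $T_n=n\Delta_n$ gives
\[
\frac1{T_n}\sum_{i=1}^n\bigl\langle DX_{t_{i-1}},\1_{(t_{i-1},t_i]}\bigr\rangle_{\HH}\limite{n\to\infty}{}\theta\,\lambda(\theta,H)>0 ,
\]
so that $M_n\to\theta\lambda(\theta,H)$ in $L^2$, not to $0$. No amount of ``tracking the near-diagonal bias with care'' removes it; and the condition $\alpha>\frac1{2H+1}$ is irrelevant here (the paper needs it only for Theorem~\ref{ASCLT discrete case}, where the numerator is rescaled by $\sqrt{T_n}$, not for consistency).

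You should draw two conclusions. First, your plan, carried out honestly, does not prove the theorem; it shows that the estimator defined by (\ref{estimator}) with genuine products satisfies $\widetilde\theta_n\to0$: combine $M_n\to\theta\lambda(\theta,H)$ with $J_1(n)\to0$ and (\ref{convergenceDiscretDenominator}) to get $\theta-\widetilde\theta_n\to\theta$. (One can even bypass Malliavin calculus: $\sum_iX_{t_{i-1}}(X_{t_i}-X_{t_{i-1}})=\frac12X_{T_n}^2-\frac12\sum_i(X_{t_i}-X_{t_{i-1}})^2$, and both terms are $o(T_n)$ in $L^1$ by (\ref{moments of X}), so the numerator is too small to produce the limit $\theta$.) Second, the point where the paper's proof differs from yours is precisely the point where the paper is unjustified: the decomposition (\ref{decompositionNumerator}) identifies the sum of ordinary products with $J_1(n)+J_2(n)+J_3(n)$, where $J_2(n)$ and $J_3(n)$ must be Skorohod integrals for the bounds (\ref{majorationofJ_2}) and $\E|J_3(n)|^p\le cT_n^{-p/2}$ (i.e.\ $J_3(n)=F_{T_n}/\sqrt{T_n}$ together with (\ref{limit of F_T^2})) to apply, and that identification silently discards exactly the trace terms computed above. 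So your proposal is not a repairable variant of the paper's argument: the statement survives only if the increments in (\ref{estimator}) are reinterpreted as divergence integrals (which are not computable from the observations $X_{t_0},\dots,X_{t_n}$), or if the estimator is corrected for the explicit bias $\theta\lambda(\theta,H)$.
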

\begin{proof}
 We first prove that
\[\frac{1}{n}\sum_{i=1}^{n} X_{t_{i-1}}^2\]
converges almost surely to $\lambda(\theta,H)$ as $n\rightarrow\infty$. Using (\ref{convergence of dominator}) it suffices to show that \begin{equation} \frac{1}{n}\sum_{i=1}^{n}
X_{t_{i-1}}^2-\frac{1}{T_n}\int_{0}^{T_n}X_{t}^{2}dt:=\frac{1}{T_n}\int_{0}^{T_n}Z_n(t)\
dt\limite{n \to \infty}{a.s.}0,
\end{equation}
where
$$ Z_n(t)= X_{t_{i-1}}^2 - X_{t}^{2} , \qquad t \in [t_{i-1},t_{i}), \qquad i=1,
\ldots,n.
$$
Moreover, it is straightforward (see \cite{GKN}, or \cite{khalifa}) to check that :
for any   $p\geq 1$  there exist constants $c_p, C_p
>0$ such that
\begin{equation}\label{moments of X}
    \E(|X_t|^{p})  \leq c_p, \mbox{ and } \  \E(|X_t-X_s|^{p})  \leq C_p
|t-s|^{pH},
 \qquad\mbox{for all}\quad s,t\ge 0.
\end{equation}
Consequently, one has
\[\E\left(|Z_n(t)|^p\right)\leq C(\theta,H) \Delta_n^{pH}.\]
Applying Hölder inequality it comes
\[\E\left(\left|\frac{1}{T_n}\int_{0}^{T_n}Z_n(t)\ dt\right|^p\right)\leq C(\theta,H)  \Delta_n^{pH}.\]
Then Lemma~\ref{pathwise convergence} yields
\begin{eqnarray}\label{integral of Z_n} \frac{1}{T_n}\int_{0}^{T_n}Z_n(t)\ dt \limite{n \to \infty}{a.s.}0.\  \end{eqnarray}
Combining (\ref{integral of Z_n}) and (\ref{convergence of
dominator}) one gets
\begin{eqnarray}\frac{1}{n}\sum_{i=1}^{n}
X_{t_{i-1}}^2\limite{n \to \infty}{a.s.}\lambda(\theta,H).\label{convergenceDiscretDenominator}\end{eqnarray}
Clearly, (\ref{representation estimator-theta}) implies
\begin{eqnarray}\label{representationDiscretLSE} \theta-\widetilde{\theta}_{n}&=&
\frac{\frac{1}{T_n}\sum_{i=1}^{n}
X_{t_{i-1}}U_i}{\frac{1}{n}\sum_{i=1}^{n} X_{t_{i-1}}^2}.
\end{eqnarray}In order to prove Theorem \ref{strong consistency},  using (\ref{representationDiscretLSE})
 and (\ref{convergenceDiscretDenominator}), it is sufficient  to show that
\begin{eqnarray}  \frac{1}{T_n}\sum_{i=1}^{n}
X_{t_{i-1}}U_i\limite{n \to \infty}{a.s.}0.
\end{eqnarray}
Use the following decomposition:
\begin{eqnarray}&& \frac{1}{T_n}\sum_{i=1}^{n}
X_{t_{i-1}}U_i\nonumber\\&=& \frac{1}{T_{n}}\sum_{i=1}^{n}
X_{t_{i-1}}[U_i-(B_{t_i}-B_{t_{i-1}})]+\frac{1}{T_{n}}
\sum_{i=1}^{n}\int_{t_{i-1}}^{t_{i}}(X_{t_{i-1}}-X_t)dB_t+\frac{1}{T_{n}}\int_0^{T_n}
X_t dB_t\nonumber\\&:=&J_1(n)+J_2(n)+J_3(n).\label{decompositionNumerator}
\end{eqnarray}
We first study $J_1(n)$. Fix $p\geq 1$
\begin{eqnarray*}\E(|J_1(n)|^p)&=& \E\left(\left|\frac{1}{T_{n}}\sum_{i=1}^{n}
X_{t_{i-1}}[U_i-(B_{t_i}-B_{t_{i-1}})]\right|^p\right)\\&=&\E\left(\left|\frac{1}{T_{n}}
\sum_{i=1}^{n} X_{t_{i-1}}
\int_{t_{i-1}}^{{t_i}}\theta(X_{t_{i-1}}-X_t)dt \right|^p\right)\\&
=&\E\left(\left|\frac{1}{T_{n}}
 \int_{0}^{T_n}Z_{1,n}(t)dt \right|^p\right),
\end{eqnarray*}where
$$ Z_{1,n}(t)=\theta  X_{t_{i-1}}(X_{t_{i-1}} - X_{t}) , \qquad t \in [t_{i-1},t_{i}), \qquad i=1,
\ldots,n.
$$
Using (\ref{moments of X}) together  with Cauchy
inequality there exists a
  constant $c$ depending only on $p,\  \theta$ and $H$ such that
\[\E\left(|Z_{1,n}(t)|^p\right)\leq c \Delta_n^{pH}.\]
Applying Hölder inequality it comes
\begin{eqnarray}\label{majorationofJ_1}\E(|J_1(n)|^p)=\E\left(\left|\frac{1}{T_n}\int_{0}^{T_n}Z_{1,n}(t)\right|^p\right)\leq c  \Delta_n^{pH}.\end{eqnarray}
According to Lemma \ref{pathwise convergence} we obtain
\begin{eqnarray} J_1(n) \limite{n \to \infty}{a.s.}0.\end{eqnarray}
Using the same argument as in the proof of \cite[Theorem
3.3]{khalifa}, (\ref{majoration second moment of
skorohod}) leads to
\begin{eqnarray} \E(|J_2(n)|^p)&\leq& c
n^{p(H-1)}\Delta_n^{p(2H-1)}.\label{majorationofJ_2}\end{eqnarray}
Thus, Lemma \ref{pathwise convergence} yields
 \begin{eqnarray*} J_2(n) \limite{n \to \infty}{a.s.}0.\end{eqnarray*}
 Finally, by (\ref{limit of F_T^2}) we have
 \begin{eqnarray*} \E(|J_3(n)|^p)&\leq& c T_n^{-p/2}\\&\leq& c
 n^{-\frac{p}{2}(1-\alpha)}.
 \end{eqnarray*}
Applying Lemma~\ref{pathwise convergence}, the sequence $(J_3(n))_n$
converges almost surely to zero which completes the proof.
\end{proof}

\begin{theorem}\label{ASCLT
discrete case}Assume $H\in(1/2,3/4)$. Then almost surely, for all
$z\in\R$,
\[ \frac{1}{\log n}\sum_{k=1}^{n}\frac{1}{k}{\bf
1}_{\{\frac{\sqrt{T_k}}{\sigma_{T_k}}(\theta-\widetilde{\theta}_k)\leq
z\}}\limite{n \to \infty}{a.s.}\phi(z),\] or
equivalently, for any bounded and continuous function
$\varphi$
\[ \frac{1}{\log n}\sum_{k=1}^{n}\frac{1}{k}\varphi\left(\frac{\sqrt{T_k}}{\sigma_{T_k}}(\theta-\widetilde{\theta}_k)\right)
\limite{n\to \infty}{a.s.} \E(\varphi(N)).\]
\end{theorem}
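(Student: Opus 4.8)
The plan is to reduce the discrete statement to the continuous one by writing the normalised estimator as a ratio that satisfies the ASCLT plus a remainder that vanishes almost surely, and then to apply Theorem~\ref{ASCLT quotient} followed by Theorem~\ref{ASCLT sum}. The definition of $F_t$ together with (\ref{representation of LSE}) gives $F_t=\frac{1}{\sqrt t}\int_0^t X_s\,dB_s$, so the third term in the decomposition (\ref{decompositionNumerator}) of the numerator is $J_3(n)=\frac{1}{T_n}\int_0^{T_n}X_t\,dB_t=\frac{1}{\sqrt{T_n}}F_{T_n}$. Combining this with (\ref{representationDiscretLSE}), I would write
\[
\frac{\sqrt{T_n}}{\sigma_{T_n}}\big(\theta-\widetilde{\theta}_n\big)=\frac{G_{T_n}}{\widetilde R_n}+\rho_n,
\]
where $G_{T_n}=F_{T_n}/\sqrt{\E(F_{T_n}^2)}=I_2(\widetilde f_{T_n})$ with $\widetilde f_{T_n}=f_{T_n}/\sqrt{\E(F_{T_n}^2)}$, where $\widetilde R_n=\frac{1}{\lambda(\theta,H)}\,\frac1n\sum_{i=1}^n X_{t_{i-1}}^2$ is the discrete counterpart of the ratio $R_n$ of the continuous case, and where $\rho_n=\frac{\sqrt{T_n}}{\sigma_{T_n}}\big(\frac1n\sum_{i=1}^n X_{t_{i-1}}^2\big)^{-1}\big(J_1(n)+J_2(n)\big)$. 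By (\ref{convergenceDiscretDenominator}) the sequence $\widetilde R_n$ is positive and converges almost surely to $1$, so once $\{G_{T_n}\}$ is known to satisfy the ASCLT, Theorem~\ref{ASCLT quotient} yields the ASCLT for $\{G_{T_n}/\widetilde R_n\}$, and Theorem~\ref{ASCLT sum} transfers it to the full sequence provided $\rho_n\to0$ almost surely.

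To prove that $\{G_{T_n}\}$ satisfies the ASCLT I would check the hypotheses of Theorem~\ref{ThemBercu} for the second-chaos kernels $\widetilde f_{T_n}\in{\cal H}^{\odot2}$. The normalisation $\E(G_{T_n}^2)=1$ holds by construction, and since $T_n=n^{1-\alpha}\to\infty$ the convergence of $G_{T_n}$ to $\mathcal N(0,1)$ follows from (\ref{limit of F_T^2}) and the central limit theorem (\ref{limit in distribution of F_T^2}). Because $\E(F_{T_k}^2)\to A(\theta,H)>0$ and $T_k\ge1$, the normalising constants $\sqrt{\E(F_{T_k}^2)}$ are bounded away from $0$ and $\infty$, so both chaotic conditions reduce to the estimates already established in the proof of Theorem~\ref{ASCLT continuous case}, now read at the time $t=T_k$ instead of $t=k$. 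For (\ref{firstcondition ofBNT}), combining (\ref{majoratioofDF-|DF|}) with (\ref{estimateofDF-|DF|}) gives $\|\widetilde f_{T_k}\otimes_1\widetilde f_{T_k}\|_{{\cal H}^{\otimes2}}\le C(\theta,H)\,T_k^{4H-3}=C(\theta,H)\,k^{(1-\alpha)(4H-3)}$; as $H<3/4$ this exponent is negative, hence $\sum_{k\ge1}k^{-1+(1-\alpha)(4H-3)}<\infty$ and the outer series converges. For (\ref{secondcondition ofBNT}) I would reuse the computation through $D^{(1)},\dots,D^{(4)}$, which is valid for arbitrary times and not only integers, to obtain $|\langle\widetilde f_{T_k},\widetilde f_{T_l}\rangle_{{\cal H}^{\otimes2}}|\le C(\theta,H)\sqrt{T_k/T_l}=C(\theta,H)(k/l)^{(1-\alpha)/2}$ for $k<l$; summing exactly as in (\ref{J bound}) gives $\sum_{k,l=1}^n\frac{|\langle\widetilde f_{T_k},\widetilde f_{T_l}\rangle|}{kl}\le C(\theta,H)\log n$, and hence a convergent series $\sum_n\frac{1}{n\log^2n}$. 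Theorem~\ref{ThemBercu} then applies.

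Finally I would show $\rho_n\to0$ almost surely. Since $\sigma_{T_n}$ and $\frac1n\sum_{i=1}^n X_{t_{i-1}}^2$ both converge almost surely to positive constants, it suffices to prove $\sqrt{T_n}\,\big(J_1(n)+J_2(n)\big)\to0$ almost surely. From the moment bounds (\ref{majorationofJ_1}) and (\ref{majorationofJ_2}), for every $p\ge1$,
\[
\E\big(|\sqrt{T_n}\,J_1(n)|^p\big)\le c\,T_n^{p/2}\Delta_n^{pH}=c\,n^{p\left(\frac{1-\alpha}{2}-\alpha H\right)},\qquad
\E\big(|\sqrt{T_n}\,J_2(n)|^p\big)\le c\,n^{p\left(H-\frac12+\alpha\left(\frac12-2H\right)\right)}.
\]
The first exponent is negative exactly because $\alpha>\frac{1}{2H+1}$, while the second, being decreasing in $\alpha$ and equal to $\frac{2H(H-1)}{1+2H}<0$ at the endpoint $\alpha=\frac{1}{2H+1}$, is negative throughout the admissible range. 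Lemma~\ref{pathwise convergence} converts these rates into almost sure convergence to $0$, so $\rho_n\to0$ almost surely, and Theorem~\ref{ASCLT sum} applied to the ASCLT sequence $G_{T_n}/\widetilde R_n$ and the null remainder $\rho_n$ completes the proof.

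I expect the delicate point to be the second step: transporting the two chaotic conditions of Theorem~\ref{ThemBercu} from the integer times of the continuous case to the times $T_k=k^{1-\alpha}$. One must verify that the renormalisation by $\sqrt{\E(F_{T_k}^2)}$ stays bounded above and below (which uses $T_k\ge1$ and the positivity of the limit $A(\theta,H)$) and that the kernel estimates for $\|f_t\otimes_1 f_t\|$ and $|\langle f_s,f_t\rangle|$ genuinely hold at general times; the subsequent bookkeeping of exponents in the remainder step is routine but relies crucially on the standing assumption $\alpha\in(\frac{1}{2H+1},1)$.
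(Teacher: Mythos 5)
Your proof is correct and takes essentially the same route as the paper: the same decomposition of the numerator into $J_1(n)+J_2(n)+J_3(n)$ with $J_3(n)=F_{T_n}/\sqrt{T_n}$, the same moment bounds fed into Lemma~\ref{pathwise convergence}, and the same verification of the conditions of Theorem~\ref{ThemBercu} for $G_{T_n}$; the only (immaterial) difference is the order of the two reduction steps, since you apply Theorem~\ref{ASCLT quotient} first and then Theorem~\ref{ASCLT sum} to add the remainder $\rho_n$, whereas the paper first absorbs $\sqrt{T_n/\E(F_{T_n}^2)}\,(J_1(n)+J_2(n))$ into the numerator $\overline{G}_n$ via Theorem~\ref{ASCLT sum} and then divides by $\overline{R}_n$ via Theorem~\ref{ASCLT quotient}. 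If anything, your write-up is more explicit than the paper's appeal to ``the same arguments'': you actually carry out the kernel estimates at the times $T_k=k^{1-\alpha}$ and check the negativity of the exponents under $\alpha>\frac{1}{2H+1}$, which is exactly what the paper's terse final paragraph requires.
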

\begin{proof}
Let us define
\begin{eqnarray*}\label{representationofG_nbar}
\overline{G}_n=\frac{1}{\sqrt{T_nE(F_{T_n}^2)}}\sum_{i=1}^{n}
X_{t_{i-1}}U_i
\end{eqnarray*}
and
\begin{eqnarray*}\label{representationofR_nbar} \overline{R}_n={\frac{1}{n\lambda(\theta,H)}\sum_{i=1}^{n}
X_{t_{i-1}}^2}
\end{eqnarray*} such that, from (\ref{representationDiscretLSE}), the convergence rate of the estimator can be
rewritten as:
\begin{eqnarray*}\label{representationNormalizedDiscretLSE} \frac{\sqrt{T_n}}{\sigma_{T_n}}(\theta-\widetilde{\theta}_n)&=&
\overline{G}_n/\overline{R}_n.
\end{eqnarray*}
Clearly, (\ref{convergenceDiscretDenominator}) yields
\begin{eqnarray*} \overline{R}_n \limite{n \to \infty}{a.s.} 1.\end{eqnarray*}
Hence,   using Theorem \ref{ASCLT quotient} it remains to prove
that $\{\overline{G}_n\}_{n\geq1}$ satisfies the ASCLT.\\
It follows from (\ref{decompositionNumerator}) that
\begin{eqnarray*}
\overline{G}_n=\sqrt{\frac{T_n}{\E(F_{T_n}^2)}}\left(J_1(n)+J_2(n)+J_3(n)\right).
\end{eqnarray*}
Combining (\ref{majorationofJ_1}), (\ref{limit of F_T^2}),
(\ref{majorationofJ_2}), $\alpha>\frac{1}{2H+1}$ and Lemma
\ref{pathwise convergence} it comes
\begin{eqnarray*}\label{majorationofJ_1+J_2}
\sqrt{\frac{T_n}{\E(F_{T_n}^2)}}\left(J_1(n)+J_2(n)\right)
\limite{n \to \infty}{a.s.}0.
\end{eqnarray*}
Now, by  Theorem \ref{ASCLT sum}, it remains to prove that
$\left\{\sqrt{\frac{T_n}{\E(F_{T_n}^2)}} J_3(n)\right\}_{n\geq1}$
satisfies the ASCLT.\\
We have \[\sqrt{\frac{T_n}{\E(F_{T_n}^2)}} J_3(n)=G_{T_n}\overset{\rm
distribution}{\longrightarrow} N\sim \mathcal{N }(0, 1)\ \mbox{ as }
n\rightarrow\infty.\] Moreover, using the same arguments as in the
proof of Theorem \ref{ASCLT continuous case},
$\left\{\sqrt{\frac{T_n}{\E(F_{T_n}^2)}} J_3(n)\right\}_{n\geq1}$
satisfies the conditions (\ref{firstcondition ofBNT}) and
(\ref{secondcondition ofBNT}) of Theorem \ref{ThemBercu}. This concludes the proof of Theorem~\ref{ASCLT discrete case}.
\end{proof}

{\small

}

\end{document}